\def\N{{\mathbb N}}
\def\C{{\mathbb C}}
\def\1{{\bf 1}}
\newtheorem{thm}{Theorem}[section]
\newtheorem{prop}[thm]{Proposition}
\newtheorem{assump}[thm]{Assumption}
\newtheorem{defi}[thm]{Definition}
\newcommand{\dom}{\mathrm{dom}}
\newcommand{\cl}{\mathcal}
\newcommand{\R}{\mathbb{R}}
\newcommand{\slim}{\,\mbox{\rm s-}\hspace{-2pt} \lim}
\newcommand{\vertiii}[1]{{\left\vert\kern-0.25ex\left\vert\kern-0.25ex\left\vert #1
    \right\vert\kern-0.25ex\right\vert\kern-0.25ex\right\vert}}
\DeclareMathOperator*{\esssup}{ess\,sup}
\newcommand\gotH{{\mathfrak{H}}}
\newcommand\gotX{{\mathfrak{X}}}
\newcommand\dR{{\mathbb{R}}}
\newcommand\dN{{\mathbb{N}}}
\newcommand{\ga}{{\alpha}}
\newcommand{\gb}{{\beta}}
\newcommand{\gd}{{\delta}}
\newcommand{\gD}{{\Delta}}
\newcommand{\go}{{\omega}}
\newcommand\gt{{\tau}}
\newcommand{\gT}{{\Theta}}
\newcommand\cA{{\mathcal{A}}}
\newcommand\cB{{\mathcal{B}}}
\newcommand\cC{{\mathcal{C}}}
\newcommand\cK{{\mathcal{K}}}
\newcommand\cU{{\mathcal{U}}}
\newcommand{\ba}{\begin{array}}
\newcommand{\ea}{\end{array}}
\newcommand{\bea}{\begin{eqnarray}}
\newcommand{\eea}{\end{eqnarray}}
\newcommand{\bead}{\begin{eqnarray*}}
\newcommand{\eead}{\end{eqnarray*}}
\newcommand{\be}{\begin{equation}}
\newcommand{\ee}{\end{equation}}
\newcommand{\bed}{\begin{displaymath}}
\newcommand{\eed}{\end{displaymath}}
\newcommand{\bs}{\begin{split}}
\newcommand{\ees}{\end{split}}
\newcommand{\RE}{{\Re}e}
\newcommand{\wt}{\widetilde}
\begin{document}

\title*{{Operator-norm convergence of the Trotter product formula on Hilbert and
Banach spaces: a short survey}}
\titlerunning{Survey on the Trotter product formula}
\author{Hagen Neidhardt, Artur Stephan and Valentin A. Zagrebnov}
\authorrunning{H.~Neidhardt, A.~Stephan, V:A.~Zagrebnov}
\institute{H.~Neidhardt \at WIAS Berlin, Mohrenstr. 39, D-10117 Berlin, Germany\\
\email{hagen.neidhardt@wias-berlin.de}
\and
A.~Stephan\at HU Berlin, Institut f\"ur Mathematik, Unter den Linden 6, D-10099 Berlin, Germany\\ \email{stephan@math.hu-berlin.de}
\and
V.A.~Zagrebnov \at Universit\'{e} d'Aix-Marseille -
Institut de Math\'{e}matiques de Marseille  (UMR 7373), CMI-Technop\^{o}le Ch\^{a}teau-Gombert,
39, rue F. Joliot Curie, 13453 Marseille\\
\email{valentin.zagrebnov@univ-amu.fr}
}
\maketitle

\vspace{-2.5cm}

\textit{Dedicated to Haim Brezis and Louis Nirenberg un deep admiration}

\vspace{0.5cm}

\abstract{We give a review of results on the operator-norm convergence of the Trotter product formula
on Hilbert and Banach spaces, which is focused on the problem of its convergence rates. Some recent results concerning evolution semigroups are presented in details.
}

\section{Introduction} \label{sec:1}
\renewcommand{\theequation}{\arabic{section}.\arabic{equation}}

Recall that the product formula
\be\label{eq:1.1}
e^{-\gt C} = \lim_{n\to\infty}\left(e^{-\gt A/n}e^{-\gt B/n}\right)^n, \quad \gt \ge 0,
\ee
was established by S.~Lie (in 1875) for matrices where $C := A + B$. {{The proof of formula
\eqref{eq:1.1} can be carried over easily to bounded operators on Banach spaces}}. Moreover, a straightforward computation shows that the {{convergence rate is $O(1/n)$}}, i.e.
\be\label{eq:1.2}
\sup_{\gt \in [0,T]}\|e^{-\gt A/n}e^{-\gt B/n} - e^{-\gt C/n}\| = O(1/n).
\ee
H.~Trotter \cite{Trotter1959} has extended this result to unbounded operators $A$ and $B$ on Banach spaces,
but in the strong operator topology. He proved that if $A$ and $B$ are generators of contractions semigroups
on a separable Banach space such that the algebraic sum $A+B$ is a densely defined closable operator and the closure
$C = \overline{A+B}$ is a generator of a contraction semigroup, then
\be\label{eq:1.3}
e^{-\gt C} = \slim_{n\to\infty}\big(e^{-\gt A/n}e^{-\gt B/n}\big)^n \ ,
\ee
uniformly in $\gt \in [0,T]$ for any $T > 0$.

{{Formula \eqref{eq:1.3} is often
called the Trotter or the Lie-Trotter product formula. It was a long-time belief that this formula
is valid only in the \textit{strong} operator topology. But in nineties it was discovered that
under certain quite standard assumptions the strong convergence of the Trotter product formula can be improved to the \textit{operator-norm} convergence. In the following we give a review of these results.}}

The paper is organised as follows. In Section \ref{sec:2.1} we give an overview on operator-norm convergence of the Trotter product formula if the generators $A$ and $B$ are non-negative \textit{self-adjoint} operators. Section {{\ref{sec:2.2} summarises the case  when}} one of the generator is only a \textit{maximal accretive} operator.
{{Section \ref{sec:2.3} is devoted to the \textit{evolution} case}}, which arises in the theory of the abstract \textit{non-autonomous} Cauchy problem. {{These results are commented}} in Section \ref{sec:2.4}.

Section \ref{sec:3} {{is concerned with}} the operator-norm convergence of the Trotter product formula on
the Banach spaces. Section \ref{sec:3.1} presents the results under the assumption that one of generators
is for holomorphic semigroup. Section \ref{sec:3.2} considers again the evolution case {{but on Banach spaces.}} The relation between \textit{evolution semigroups} and propagators is explained in Section \ref{sec:3.3}. We comment the results in Section \ref{sec:3.4}.

In Section \ref{sec:4} we collect some examples and {{counterexamples}}. They show
what is expectable and what is {{not}} and even surprising.

{{We use below the following notations and definitions.}}
\begin{enumerate}

\item We use a definition of the semigroup \textit{generator} $C$ (\ref{eq:1.3}), which differs from the
standard one by a \textit{minus}, as it is in the book \cite{Kato1980}.

\item Furthermore, we widely use the so-called \textit{Landau symbols}:
\begin{align}
g(n) &= O(f(n)) \Longleftrightarrow \limsup_{n\to\infty} \left|\frac{g(n)}{f(n)}\right| < \infty \ ,
\nonumber \\
g(n) &= o(f(n)) \Longleftrightarrow \limsup_{n\to\infty} \left|\frac{g(n)}{f(n)}\right| = 0 \ ,
\nonumber \\
g(n) &= \gT(f(n)) \Longleftrightarrow 0 < \liminf_{n\to\infty} \left|\frac{g(n)}{f(n)}\right|
\le \limsup_{n\to\infty} \left|\frac{g(n)}{f(n)}\right| < \infty \ , \nonumber \\
g(n) &=  \go(f(n)) \Longleftrightarrow \limsup_{n\to\infty} \left|\frac{g(n)}{f(n)}\right| = \infty \ .
\nonumber
\end{align}

\item
{{We use the notation $C^{0,\beta}([0,T])$ for the H\"older  ($\gb \in (0,1)$) and, respectively, for
the Lipschitz ($\gb = 1$) continuous functions.}}

\end{enumerate}

\section{Trotter product formula on Hilbert spaces} \label{sec:2}

\subsection{Self-adjoint case}\label{sec:2.1}

Considering the Trotter product formula on a separable Hilbert space $\gotH$
T.~Kato has shown in \cite{Kato1974,Kato1978} that for
non-negative operators $A$ and $B$ the Trotter formula \eqref{eq:1.3} holds in the \textit{strong} operator
topology if $\dom(\sqrt{A}) \cap \dom(\sqrt{B})$ is dense in the Hilbert space and $C = A \dot+ B$
is the form-sum of operators $A$ and $B$.

Naturally the problem arises whether {{Kato's result}} can be extended
to the operator-norm convergence. A first attempt in this direction was undertaken by Rogava
\cite{Rogava1991}. He claimed that if $A$ and $B$ {{are}} non-negative self-adjoint operators such that
$\dom(A) \subseteq \dom(B)$ and the operator-sum: $C = A + B$, is \textit{self-adjoint}, then
\be\label{eq:1.4a}
\|(e^{-\gt A/n}e^{-\gt B/n})^n - e^{-\gt C}\| = O(\ln(n)/\sqrt{n}), \quad n\to\infty,
\ee
holds. In \cite{NeidhardtZagrebnov1998} it was shown that if one substitutes in above conditions the self-adjointness of the operator-sum by the $A$-smallness of $B$ with a relative bound less then \textit{one}, then \eqref{eq:1.4a} is true with the rate of convergence improved to
\bed
\|(e^{-\gt A/n}e^{-\gt B/n})^n - e^{-\gt C}\| = O(\ln(n)/n), \quad n\to\infty.
\eed

The problem in its original formulation was finally solved
in \cite{ITTZ2001}. There it was shown that the best possible in this general setup rate \eqref{eq:1.2} holds if the operator sum: $C = A+B$, is already a self-adjoint operator. Obviously, Rogava's result,
as well as many other results (including \cite{NeidhardtZagrebnov1998}), when the operator sum of generators is self-adjoint, follow from \cite{ITTZ2001} .

A new direction comes due to results for the fractional-power conditions. In \cite{NeiZag1999}, with  elucidation in \cite{IchNeiZag2004}, it was proven that assuming
\be\label{eq:2.6}
\dom(C^\ga) \subseteq \dom(A^\ga) \cap
\dom(B^\ga), \quad \ga \in ({1}/{2},1),\quad C = A \dot+B,
\ee
and
\be\label{eq:2.7}
\dom(A^{1/2}) \subseteq \dom(B^{1/2})
\ee
one obtains that
\bed
\sup_{\gt \in [0,T]}\|(e^{-\gt A/n}e^{-\gt B/n})^n - e^{-\gt C}\| = O(n^{-(2\ga-1)}).
\eed
Notice that formally $\ga = 1$ yields the rate obtained in \cite{ITTZ2001}.

We remark also that the results of \cite{IchNeiZag2004,NeiZag1999} do \textit{not} cover
the case $\ga = {1}/{2}$. Although, it turns out that in this case the Trotter
product formula converges on the operator norm:
\bed
\sup_{\gt \in [0,T]}\|(e^{-\gt A/n}e^{-\gt B/n})^n - e^{-\gt C}\| = o(1) \ ,
\eed
if $\sqrt{B}$ is \textit{relatively compact} with respect to $\sqrt{A}$, i.e.
$\sqrt{B}(I + A)^{-1/2}$ is compact, see \cite{NeiZag1999-b}.

\subsection{Nonself-adjoint case}\label{sec:2.2}

Another direction was related with attempts to extend the the Trotter, and the Trotter-Kato,
product formulae to the case of nonself-adjoint \textit{sectorial} generators {{\cite{CachZag2001}}}.
Let $A$ be a non-negative self-adjoint operator and let $B$ be a maximal \textit{accretive}
($\RE(Bf,f) \ge 0$ for $f \in \dom(B)$) operator, such that
\bed
\dom(A) \subseteq \dom(B)
\quad \mbox{and} \quad
\dom(A) \subseteq \dom(B^*).
\eed
If $B$ is $A$-small with a relative bound less than one, then the rate estimate \eqref{eq:1.4a} holds,
for generator $C$, which is the well-defined maximal accretive operator-sum: $C = A+B $, see \cite{CacNeiZag2001}.

In \cite{CacNeiZag2002} this result was generalised as follows. Let $A$ be a non-negative self-adjoint operator and let $B$ be a maximal accretive operator such that
$\dom(A) \subseteq \dom(B)$ and $B$ is $A$-small with relative bound less than one. If the condition
\bed
\dom((C^*)^\ga) \subseteq \dom(A^\ga) \cap \dom((B^*)^\ga), \quad  C = A + B \ ,
\eed
is satisfied for some $\ga \in (0,1]$, then the norm-convergent Trotter product formula:
\bed
\sup_{\gt \in [0,T]}\|(e^{-\gt A/n}e^{-\gt B/n})^n - e^{-\gt C}\| = O(\ln(n)/n^\ga) \ ,
\eed
holds as $n \to \infty$.

In fact, more results are known about the operator-norm Trotter product formula convergence for nonself-adjoint semigroups, but \textit{without} the rate estimates, see \cite{CachZag1999}.

\subsection{Evolution case}\label{sec:2.3}

At the first glance a very different result about a Trotter-type  product formula was obtained in \cite{IchinoseTamura1998}.
The authors consider instead of the self-adjoint operator $B$ a family $\{B(t)\}_{t\in [0,T]}$ of
self-adjoint operators on the separable Hilbert space $\gotH$ such that the condition
\be\label{eq:2.13}
\dom(A^\ga) \subseteq \dom(B(t)), \quad t \in [0,T],
\ee
is satisfied for some $\ga \in [0,1)$ for $A$, which  is a non-negative self-adjoint operator.
Then the operator sum $C(t) = A + B(t)$ defines a family of self-adjoint operators in $\gotH$ space such that $\dom(C(t)) = \dom(A)$, $t \in [0,T]$. With the family $\{C(t)\}_{t\in [0,T]}$ one associates the \textit{evolution} equation:

\be\label{eq:2.14}
\partial_{t}u(t) = -C(t)u(t), \quad t\in [0,T] \ ,
\ee
corresponding to the non-autonomous Cauchy problem with initial condition $u_0  = u (0)$ for $t=0$.

It turns out that equation \eqref{eq:2.14} admits a \textit{propagator} $\{U(t,s)\}_{(t,s) \in \bar\gD}$,
$\bar\gD = \{(t,s) \in [0,T]\times[0,T]: 0 \le s \le t \le T\}$, which solves this problem. We remind that a family  $\{U(t,s)\}_{(t,s) \in \gD}$ of bounded operators
is called a propagator if the operator-valued function $U(\cdot,\cdot): \bar\gD \longrightarrow \cB(\gotH)$
is strongly continuous and verifies the conditions:
\begin{align}
U(t,t) &= I\quad  \mbox{for} \quad t \in [0,T],\label{eq:2.15}\\
U(t,s) &=U(t,r)U(r,s) \quad \mbox{for} \quad t,r,s \in [0,T] \quad  \mbox{with} \quad s \le r \le t \ .
\label{eq:2.16}
\end{align}
Let $\{t_j\}^N_{j= 0}$ be a partition of the closed interval ${{[0,t]}}$:
\bed
0 = t_0 < t_1 < \ldots < t_{N-1} < t_N = t, \quad t_j = \gt j, \gt = {t}/{N} \ ,
\eed
for any $0 < t \le T$. Further, let
\be\label{eq:2.18}
Q_j(t,s;n) = e^{-\tfrac{t-s}{n}A}e^{-\tfrac{t-s}{n}B(s + j\tfrac{t-s}{n})}, \quad j  = 0,1,\ldots,n \ ,
\ee
and
\be\label{eq:2.19}
\begin{split}
E(t,s;n) &:= Q_{n-1}(t,s;n)Q_{n-2}(t,s;n) \times \cdots \times Q_1(t,s;n)Q_0(t,s;n)\\
 &:= \prod_{j=0}^{(n-1)\leftarrow} Q_j(t,s\,;n) \ ,
\end{split}
\ee
where the symbol $\prod_{j=1}^{n\leftarrow}$ means that the product is increasingly ordered in $j$ from the right to the left.
If in addition to assumption \eqref{eq:2.13}, the condition
\be\label{eq:2.20}
\|A^{-\ga}(B(t) - B(s))A^{-\ga}\| \le L_1 |t-s|, \quad t,s \in [0,T], \quad L_1 > 0,
\ee
is satisfied, then in \cite{IchinoseTamura1998} it was proved that
the propagator $\{U(t,s)\}_{(t,s)\in \bar\gD}$,
which solves the Cauchy problem \eqref{eq:2.14}, admits the approximation
\be\label{eq:2.21}
\sup_{t\in [0,T]}\|U(t,0) - E_n(t,0;n)\| = O(\ln(n)/n) \quad \mbox{as \;\; $n\to\infty$}.
\ee
Scrutinising the proof in \cite{IchinoseTamura1998} one finds that in fact
the claim \eqref{eq:2.21} can be slightly generalised to any interval $\bar\gD$
\be\label{eq:2.22}
\sup_{(t,s)\in\bar\gD}\|U(t,s) - E_n(t,s;n)\| = O(\ln(n)/n) \quad \mbox{as \;\; $n\to\infty$}.
\ee

At the first glance, it seems that the result \eqref{eq:2.22} is quite far from the Trotter
product formula. However, this is not the case.
To show this we follow the evolution semigroup approach to evolution equations developed in \cite{NeiSteZag2016,NeiZag2009}.
Let us introduce the Hilbert space ${{L^2([0,T],\gotH)}}$ and consider on this space
the semigroup
\be\label{eq:2.23}
(\cU(\gt)f)(t) := U(t,t-\gt)\chi_{[0,T]}(t-\gt)f(t-\gt), \quad f \in L^2([0,T],\gotH),
\ee
$t \in [0,T]$. It turns out that $\{\cU(\gt)\}_{\gt \in \R_+}$ is a $C_0$-semigroup on the Hilbert space
of the $\gotH$-valued trajectories $L^2([0,T],\gotH)$. By $\cK$ we denote its generator, then $\cU(\gt) = e^{-\gt\cK}$, $\gt \ge 0$.

Let us introduce two multiplication operators
\be\label{eq:2.24}
\begin{split}
(\cA f)(t) &= Af(t), \quad f \in \dom(\cA),\\
\dom(\cA) &:= \left\{f \in L^2([0,T],\gotH):
\begin{matrix}
f(t) \in \dom(A) \quad \mbox{for a.e. $t \in [0,T]$}\\
Af(t) \in L^2([0,T],\gotH)
\end{matrix}
\right\} \ ,
\end{split}
\ee
and
\be\label{eq:2.25}
\begin{split}
(\cB f)(t) &= B(t)f(t), \quad f \in \dom(\cB),\\
\dom(\cB) &:= \left\{f \in L^2([0,T],\gotH):
\begin{matrix}
f(t) \in \dom(B(t)) \quad \mbox{for a.e. $t \in [0,T]$}\\
B(t)f(t) \in L^2([0,T],\gotH)
\end{matrix}
\right\} \ .
\end{split}
\ee
Note that both operators are well-defined and self-adjoint in ${{L^2([0,T],\gotH)}}$.
Moreover, condition (\ref{eq:2.13}) yields: $\dom(\cA) \subseteq \dom(\cB)$.

By $D_0$ we define in $L^2([0,T],\gotH)$ the generator of the {{right-shift}} semigroup $e^{-\gt D_0}$
given by
\bed
(e^{-\gt D_0}f)(t) = \chi_{[0,T]}(t-\gt)f(t-\gt), \quad f \in L^2([0,T],\gotH) .
\eed
Notice that the operator $D_0$ is defined by
\bed
\begin{split}
(D_0f)(t) &= \frac{\partial}{\partial t}f(t),\\
f \in \dom(D_0) &:= \{f \in W^{2,2}([0,T],\gotH): f(0) = 0\}.
\end{split}
\eed

Collecting these definitions we introduce the operator
\be\label{eq:2.27}
\begin{split}
(\wt\cK f) &= D_0f + \cA f + \cB f, \\
f \in \dom(\wt \cK) &:= \dom(D_0) \cap \dom(\cA) \cap \dom(\cB).
\end{split}
\ee
Since $\{U(t,s)\}_{(t,s)\in\bar\gD}$ is a propagator solving the evolution equation
\eqref{eq:2.14} one deduces that by virtue of assumptions
\eqref{eq:2.13} and \eqref{eq:2.20} the operator $\wt\cK$ is closable and that its closure
coincides with generator $\cK$, see \cite[Theorem 4.5]{NeiSteZag2016}.

Furthermore, let us define the operator
\bed
(\cK_0f)(t) = D_0f + \cA f, \quad f \in \dom(\cK_0) := \dom(D_0) \cap \dom(\cA).
\eed
Then operator $\cK_0$ is a generator of the $C_0$-semigroup, which has the form
\bed
(e^{-\gt \cK_0}f)(t) = e^{-\gt A}\chi_{t\in [0,T]}(t-\gt)f(t-\gt), \quad f \in L^2([0,T],\gotH).
\eed
Note that we obviously get that $\cK = \overline{\cK_0 + \cB} = {{\cK_0 + \cB}}$.

For the pair $\{\cK_0,\cB\}$ one obtains the following result.
\begin{prop}\label{prop:2.1}
Let $A$ be a non-negative self-adjoint operator on the separable Hilbert space $\gotH$ and let
$\{B(t)\}_{t\in[0,T]}$ be a family of non-negative self-adjoint operators.
If the assumptions \eqref{eq:2.13} and \eqref{eq:2.20} are satisfied,
then
\be\label{eq:2.31}
\sup_{\gt \ge 0}\|(e^{-\gt\cK_0/n}e^{-\gt\cB/n})^n - e^{-\gt\cK}\| = O(\ln(n)/n) \quad \mbox{for $n \to \infty$} \ ,
\ee
and
\be\label{eq:2.32}
\sup_{\gt \ge 0}\|(e^{-\gt\cB/n}e^{-\gt\cK_0/n})^n - e^{-\gt\cK}\| = O(\ln(n)/n) \quad \mbox{for $n \to \infty$} \ .
\ee
\end{prop}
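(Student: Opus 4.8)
The plan is to reduce the statement of Proposition~\ref{prop:2.1} to the already-established propagator estimate \eqref{eq:2.22}, using the dictionary between evolution semigroups on $L^2([0,T],\gotH)$ and propagators on $\bar\gD$. First I would identify explicitly the semigroup generated by $\cK_0/n$ composed with the one generated by $\cB/n$: since $e^{-\gt\cK_0}$ acts as multiplication by $e^{-\gt A}$ followed by the right shift, while $e^{-\gt\cB}$ acts as pointwise multiplication by $e^{-\gt B(t)}$ (no shift), the composition $e^{-\gt\cK_0/n}e^{-\gt\cB/n}$ sends $f$ to the function $t\mapsto e^{-\frac{\gt}{n}A}\,e^{-\frac{\gt}{n}B(t-\frac{\gt}{n})}\,\chi_{[0,T]}(t-\tfrac{\gt}{n})\,f(t-\tfrac{\gt}{n})$. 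Iterating $n$ times, one sees by a direct bookkeeping of the shifts that $(e^{-\gt\cK_0/n}e^{-\gt\cB/n})^n$ is precisely the multiplication-by-$E(t,t-\gt;n)$ operator composed with the shift by $\gt$, i.e.
\[
\big((e^{-\gt\cK_0/n}e^{-\gt\cB/n})^n f\big)(t) = E\!\left(t,t-\gt;n\right)\chi_{[0,T]}(t-\gt)\,f(t-\gt),
\]
with $E(\cdot,\cdot;n)$ as in \eqref{eq:2.19} (the ordering of the factors $Q_j$ matches because each application of $e^{-\gt\cK_0/n}$ advances the shift argument fed into the next $B$-factor). Likewise $e^{-\gt\cK}$ is the evolution semigroup $\cU(\gt)$ built from the true propagator $U(t,s)$.

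Second I would observe that for an operator of the form $(Rf)(t) = M(t)\chi_{[0,T]}(t-\gt)f(t-\gt)$ on $L^2([0,T],\gotH)$ with a uniformly bounded, strongly measurable multiplier $M(\cdot)$, the operator norm is exactly $\esssup_{t} \|M(t)\|$ (one direction is Cauchy--Schwarz; the other follows by concentrating $f$ near a Lebesgue point of $t\mapsto\|M(t)\|$ and using that the shift preserves $L^2$-norm up to the characteristic-function cutoff, which only helps). Applying this to the difference $e^{-\gt\cK} - (e^{-\gt\cK_0/n}e^{-\gt\cB/n})^n$, whose multiplier is $U(t,t-\gt) - E(t,t-\gt;n)$ on $\{t-\gt\ge 0\}$ and zero otherwise, gives
\[
\big\|e^{-\gt\cK} - (e^{-\gt\cK_0/n}e^{-\gt\cB/n})^n\big\| = \esssup_{t\in[\gt,T]}\big\|U(t,t-\gt) - E(t,t-\gt;n)\big\| \le \sup_{(t,s)\in\bar\gD}\|U(t,s)-E(t,s;n)\|,
\]
and taking $\sup_{\gt\ge0}$ (noting the left side vanishes for $\gt>T$) yields \eqref{eq:2.31} directly from \eqref{eq:2.22}. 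For \eqref{eq:2.32} the same argument applies: $(e^{-\gt\cB/n}e^{-\gt\cK_0/n})^n$ is again multiplication-by-a-product composed with the shift, where now the product is $\widetilde E(t,t-\gt;n) := \prod_{j}^{\leftarrow} e^{-\frac{\gt}{n}B(t-\frac{\gt}{n}+j\frac{\gt}{n})}e^{-\frac{\gt}{n}A}$, i.e. the factors are taken in the other order; one checks that $\widetilde E(t,s;n)$ differs from $E(t,s;n)$ only by a shift of the $B$-sampling points and an overall conjugation by $e^{\pm\frac{\gt}{n}A}$, so the estimate of \cite{IchinoseTamura1998} applies verbatim (its proof is symmetric in the two orderings under \eqref{eq:2.13} and \eqref{eq:2.20}), giving the same $O(\ln n/n)$ bound.

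The main obstacle is the careful combinatorial verification that the $n$-fold product of the two ``multiply-then-shift'' operators really collapses to the single ordered product $E(t,t-\gt;n)$ with the correct sampling points $s+j\frac{t-s}{n}$ appearing in $Q_j$ — in particular that the shift accumulated before the $(j{+}1)$-st application of the $B$-multiplier shifts the argument of $B$ by exactly $j$ steps of size $\gt/n$, which is what produces the arguments in \eqref{eq:2.18} with $s=t-\gt$. This is routine but must be done with care about the direction of the shift and the placement of the cutoffs $\chi_{[0,T]}$; once it is in place, the rest is the norm identity for multiplier-plus-shift operators and a direct appeal to \eqref{eq:2.22}. A secondary point to check is that the hypotheses of \eqref{eq:2.22} (namely \eqref{eq:2.13} and \eqref{eq:2.20}) are exactly those assumed in Proposition~\ref{prop:2.1}, together with the non-negativity of $A$ and of each $B(t)$, which guarantees all the semigroups involved are contractions and the $\esssup$ manipulations are legitimate.
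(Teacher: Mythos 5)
Your proposal is correct and follows essentially the same route as the paper: identify $(e^{-\gt\cK_0/n}e^{-\gt\cB/n})^n$ as the multiplier $E(t,t-\gt;n)$ composed with the right shift, bound the operator norm of the difference with $e^{-\gt\cK}$ by $\sup_{(t,s)\in\bar\gD}\|U(t,s)-E(t,s;n)\|$, and invoke \eqref{eq:2.22}; the reversed ordering \eqref{eq:2.32} is likewise handled by appealing to the symmetry of the Ichinose--Tamura estimate. The only cosmetic difference is that you assert the exact norm identity for multiplier-plus-shift operators, whereas the paper contents itself with the upper bound via an $L^2$ integral estimate, which is all that is needed.
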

\begin{proof}
A straightforward computation shows that for $f \in L^2([0,T],\gotH)$ one gets
\bed
((e^{-\gt\cK_0/n}e^{-\gt\cB/n})^nf)(t) = E(t,t-\gt\,;n)\chi_{[0,T]}(t-\gt)f(t-\gt),
\eed
where $E(t,s\,;n)$ is given by \eqref{eq:2.19}. Taking into account
\eqref{eq:2.23} we obtain
\bed
\begin{split}
((e^{-\gt\cK_0/n}&e^{-\gt\cB/n})^nf)(t) - (e^{-\gt \cK}f)(t)\\
& =
(E(t,t-\gt\,;n) - U(t,t-\gt))\chi_{[0,T]}(t-\gt)f(t-\gt) \ ,
\end{split}
\eed
which yields the estimate
\bed
\begin{split}
\|((e^{-\gt\cK_0/n}&e^{-\gt\cB/n})^nf)(t) - (e^{-\gt \cK}f)(t)\|\\
& \le
\chi_{[0,T]}(t-\gt)\|{{E}}(t,t-\gt\,;n) - U(t,t-\gt)\|\;\|f(t-\gt)\|.
\end{split}
\eed
Hence, this implies
\bed
\begin{split}
\sup_{\gt \ge 0} \|&(e^{-\gt\cK_0/n}e^{-\gt\cB/n})^n f - e^{-\gt \cK}f\|^2\\
&\le
\sup_{\gt \ge 0}\int^T_0 \chi_{[0,T]}(t-\gt)\|E(t,t-\gt\,;n) - U(t,t-\gt)\|^2\;\|f(t-\gt)\|^2dt\\
&\le
\int^T_0 \sup_{(t,s)\in\bar\gD}\|E(t,s\,;n) - U(t,s)\|^2\;\|f(r)\|^2dr.
\end{split}
\eed
Using \eqref{eq:2.22} we immediately obtain \eqref{eq:2.31}.
{{Similarly \eqref{eq:2.32} follows from \cite{IchinoseTamura1998}.}}
\hfill$\Box$
\end{proof}

\subsection{Comments}\label{sec:2.4}

\paragraph{Section \ref{sec:2.1}}
\vspace{-2ex}
{{The operator-norm convergence rate $O(1/n)$ of \cite{ITTZ2001}
for pairs of non-negative self-adjoint operators is \textit{sharp} and \textit{ultimate optimal} due to
observations in \cite{Tam2000}.
The same remark concerns the sharpness and optimality of the rate $O(1/n^{2\ga-1})$ obtained first
in \cite{NeiZag1999} under assumption \eqref{eq:2.6} together with the $A^{\alpha}$-smallness of $B^{\alpha}$ with relative bound less then one. Then the same ultimate sharp rate was proven in \cite{IchNeiZag2004}, when the smallness condition is relaxed to the mild subordination \eqref{eq:2.7}. It is an open problem whether the assumption \eqref{eq:2.7} is really necessary.}}

\vspace{-4ex}
\paragraph{Section \ref{sec:2.2}}
\vspace{-2ex}
It is unclear whether the convergence rates $O(\ln(n)/n)$ and
$O(\ln(n)/n^\ga)$ are sharp. One expects convergence rates identical to that in Section \ref{sec:2.1}.

\vspace{-4ex}
\paragraph{Section \ref{sec:2.3}}
\vspace{-2ex}
The approach used here was developed in \cite{NeiSteZag2016,Nei1981,Nei1981-b,Nei1982,NeiZag2009,Nickel1996}.
The idea is to transform a \textit{time-dependent} evolution problem to a \textit{time-independent} problem, see also the next section.

Let us add some remarks. One easily checks that the operator $\cK_0$ is not
self-adjoint whereas the operator $\cB$ is self-adjoint. However, $\cK_0$ is maximal accretive.
This is in some sense
in contrast to Section \ref{sec:2.2}, where the pair $\{A,B\}$ consists of a self-adjoint operator
$A$ and an maximal accretive operator $B$ such that $\dom(A) \subseteq \dom(B)$ and
$\dom(A) \subseteq \dom(B^*)$. In the evolution case the conditions
$\dom(\cK_0) \subseteq \dom(\cB)$ and $\dom(\cK^*_0) \subseteq \dom(\cB)$
are satisfied but in the {{reversed}} order with respect to Section \ref{sec:2.2}.
However, the convergence rate $O(\ln(n)/n)$ is not affected by this.

The proof of the estimate \eqref{eq:2.21} and \eqref{eq:2.22} are very involved.
Naturally the problem arises whether one can give a direct proof the estimate
\eqref{eq:2.21} avoiding those propagator estimates.

\section{Trotter product formula on Banach spaces}\label{sec:3}

\subsection{Holomorphic case}\label{sec:3.1}

There are only few generalisations of the results of Section \ref{sec:2} to Banach spaces.
The main obstacle for that is the fact that the concept of self-adjointness is missing in the
Banach spaces. One of solution is to relax the self-adjointness replacing the non-negative self-adjoint generator $A$ by a generator of the holomorphic semigroup. The following result was proved in \cite{CachZag2001-b}.
\begin{thm}[{\cite[Theorem 3.6 and Corollary 3.7]{CachZag2001-b}}]\label{th:3.1}
Let $A$ be a generator of a holomorphic contraction semigroup on the
separable Banach space $\gotX$ and let $B$ a generator of a contraction semigroup on $\gotX$.

\item[\;\;\rm (i)] If for some $\ga \in (0,1)$ the condition
\bed
\dom(A^\ga) \subseteq \dom(B) \ ,
\eed
holds and $\dom(A^*) \subseteq \dom(B^*)$ is satisfied, then
the operator sum $C = A + B$ is a generator of a contraction semigroup and
\be\label{eq:3.38}
\sup_{\gt \in [0,T]}\|(e^{-\gt B/n}e^{-\gt A/n})^n - e^{-\gt C}\| = O(\ln(n)/n^{1-\ga})\ ,
\ee
for any $T > 0$.

\item[\;\;\rm (ii)] If for some $\ga \in (0,1)$ the condition
\bed
\dom((A^\ga)^*) \subseteq \dom(B^*) \ ,
\eed
is satisfied and $\dom(A) \subseteq \dom(B)$ is valid, then $C = A+B$ is the generator of a contraction semigroup and
\be\label{eq:3.40}
\sup_{\gt \in [0,T]}\|(e^{-\gt A/n}e^{-\gt B/n})^n - e^{-\gt C}\| = O(\ln(n)/n^{1-\ga})\ ,
\ee
for any $T > 0$.
\end{thm}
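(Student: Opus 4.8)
The plan is to prove part (i) directly by a telescoping (Chernoff-type) argument on the one-step error and then to obtain part (ii) from it by passing to adjoint semigroups; put $\tau=\gt/n$ and $F(\tau):=e^{-\tau B}e^{-\tau A}$, a family of contractions on $\gotX$. First I would dispose of the structural claim that $C=A+B$ generates a contraction semigroup: since $A^{-\ga}\in\cB(\gotX)$ and $B$ is closed, $\dom(A^\ga)\subseteq\dom(B)$ forces $BA^{-\ga}\in\cB(\gotX)$ by the closed graph theorem, and together with the moment inequality $\|A^\ga u\|\le\e\|Au\|+c_\e\|u\|$ this makes $B$ $A$-bounded with relative bound $0$; hence $\|B(\gl+A)^{-1}\|<1$ for large $\gl>0$, $\gl+A+B=(I+B(\gl+A)^{-1})(\gl+A)$ is boundedly invertible, and since $A$ and $B$ are $m$-accretive, $C=A+B$ on $\dom(A)$ is $m$-accretive, generates a holomorphic contraction semigroup, and satisfies $C^*=A^*+B^*$ on $\dom(A^*)$. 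One then has at one's disposal the holomorphic-calculus bounds $\|A^\gd e^{-rA}\|\le c_\gd r^{-\gd}$, $\|C^\gd e^{-rC}\|\le c_\gd r^{-\gd}$ for $r>0$, $\gd\ge 0$, together with $\|Be^{-rA}\|\le\|BA^{-\ga}\|\,\|A^\ga e^{-rA}\|\le c\,r^{-\ga}$.

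The core of the argument is the telescoping identity
\[
F(\tau)^n-e^{-\gt C}=\sum_{k=1}^{n}F(\tau)^{k-1}\bigl(F(\tau)-e^{-\tau C}\bigr)e^{-(n-k)\tau C},
\]
which, with $\|F(\tau)^{k-1}\|\le 1$, reduces everything to a one-step consistency estimate for $\bigl(F(\tau)-e^{-\tau C}\bigr)e^{-sC}$, $s=(n-k)\tau\ge 0$. The two Duhamel identities
\[
F(\tau)-e^{-\tau A}=-(I-e^{-\tau B})e^{-\tau A},\qquad e^{-\tau A}-e^{-\tau C}=\int_0^\tau e^{-(\tau-r)C}Be^{-rA}\,dr
\]
give the crude bound $\|F(\tau)-e^{-\tau C}\|\le c\,\tau^{1-\ga}$, using $\|(I-e^{-\tau B})e^{-\tau A}\|\le\tau\|Be^{-\tau A}\|\le c\,\tau^{1-\ga}$ and $\int_0^\tau\|Be^{-rA}\|\,dr\le c\,\tau^{1-\ga}$.

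The decisive step would be to sharpen this, in the presence of the regularising factor $e^{-sC}$, to
\[
\bigl\|\bigl(F(\tau)-e^{-\tau C}\bigr)e^{-sC}\bigr\|\le c\,\tau^{1-\ga}\,\frac{\tau}{s+\tau},
\]
i.e. to gain one extra power $\tau/(s+\tau)$ by genuinely using the holomorphic smoothing of $e^{-sC}$ (and of $e^{-\tau A}$, $e^{-(\tau-r)C}$). I would rewrite the one-step error in second-order Duhamel form $F(\tau)-e^{-\tau C}=\int_0^\tau e^{-(\tau-r)C}[A,e^{-rB}]e^{-rA}\,dr$ (all manipulations carried out inside the holomorphic functional calculus, so that the unbounded operators appear only as contour integrals of resolvents) and distribute the available powers of $A$ and $C$ among the three factors: the occurrences of $B$ standing to the left of an $e^{-rA}$ are handled by $\dom(A^\ga)\subseteq\dom(B)$, which yields the exponent $1-\ga$, while the hypothesis $\dom(A^*)\subseteq\dom(B^*)$ — equivalently $B^*(A^*)^{-1}\in\cB(\gotX^*)$, hence $\|e^{-\tau A}B\|\le c\,\tau^{-1}$ via $(e^{-\tau A}B)^*=B^*e^{-\tau A^*}$ — is exactly what lets one move $B$ to the \emph{other} side of $e^{-\tau A}$ so that the adjacent $e^{-sC}$ can act, producing the extra factor $\tau/(s+\tau)$. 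Granting this, the $k=n$ term is controlled by $c\,\tau^{1-\ga}$ and summation of the rest gives
\[
\|F(\tau)^n-e^{-\gt C}\|\le c\,\tau^{1-\ga}+\sum_{k=1}^{n-1}c\,\tau^{1-\ga}\frac{\tau}{(n-k)\tau}=c\,\tau^{1-\ga}\Bigl(1+\sum_{j=1}^{n-1}\tfrac1j\Bigr)=O\!\Bigl(\frac{\ln n}{n^{1-\ga}}\Bigr)
\]
uniformly in $\gt\in[0,T]$, the logarithm being precisely the harmonic sum produced by the borderline weight $\tau/((n-k)\tau)$.

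The main obstacle will be the sharpened one-step estimate: one must keep precise track of which unbounded operator acts from which side and deform the functional-calculus contours so that the singularities $r^{-\ga}$ (from $\|A^\ga e^{-rA}\|$) and $(\tau-r)^{-1}$, $s^{-1}$ (from $\|Ce^{-(\tau-r)C}\|$, $\|Ce^{-sC}\|$) recombine into an integrable kernel of total size $\tau^{2-\ga}/(s+\tau)$; this is the technically heavy part, and it is also the point at which the asymmetry between the two product orderings in (i) and (ii) first appears. For part (ii) I would pass to adjoints: $A^*$ generates a holomorphic contraction semigroup (on the sun-dual $\gotX^\odot$ if $\gotX$ is not reflexive), and $\dom((A^\ga)^*)=\dom((A^*)^\ga)\subseteq\dom(B^*)$ together with $\dom(A)\subseteq\dom(B)$ are exactly the hypotheses of (i) for the pair $(A^*,B^*)$; since $\bigl((e^{-\gt A/n}e^{-\gt B/n})^n\bigr)^*=(e^{-\gt B^*/n}e^{-\gt A^*/n})^n$, $(e^{-\gt C})^*=e^{-\gt C^*}$ with $C^*=A^*+B^*$, and $\|T\|=\|T^*\|$, part (i) applied to $(A^*,B^*)$ delivers \eqref{eq:3.40} at once. (Alternatively, and avoiding any reflexivity issue, (ii) follows from the mirror image of the argument for (i), with the two Duhamel sides interchanged.)
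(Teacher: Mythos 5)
First, a point of reference: the survey states Theorem \ref{th:3.1} as a quotation from \cite{CachZag2001-b} and contains no proof of it, so your proposal can only be measured against the strategy of that reference. Your architecture is in fact the standard one and essentially the one used there: show $B$ is $A$-bounded with relative bound zero (hence $C=A+B$ is m-accretive and generates a holomorphic contraction semigroup), telescope $F(\gt/n)^n-e^{-\gt C}=\sum_{k=1}^{n}F(\gt/n)^{k-1}\bigl(F(\gt/n)-e^{-\gt C/n}\bigr)e^{-(n-k)\gt C/n}$, prove a one-step consistency estimate that improves from $O(\tau^{1-\ga})$ to $O(\tau^{1-\ga}\cdot\tau/(s+\tau))$ in the presence of the smoothing factor $e^{-sC}$, and recover the logarithm from the harmonic sum. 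The preliminary steps (closed graph plus moment inequality, the crude bound $\|F(\tau)-e^{-\tau C}\|\le c\,\tau^{1-\ga}$ via the two Duhamel identities, the duality reduction of (ii) to (i)) are correct, modulo the harmless replacement of $A^{-\ga}$ by $(I+A)^{-\ga}$, since $0\in\rho(A)$ is not assumed.

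The genuine gap is exactly where you yourself locate it: the sharpened estimate $\|(F(\tau)-e^{-\tau C})e^{-sC}\|\le c\,\tau^{2-\ga}/(s+\tau)$ is asserted, not proved, and it carries the entire weight of the theorem --- without it the telescoping only yields $O(n\cdot\tau^{1-\ga})=O(\gt^{1-\ga}n^{\ga})$, which diverges. Moreover the route you sketch for it has two concrete defects. First, the representation $F(\tau)-e^{-\tau C}=\int_0^\tau e^{-(\tau-r)C}[A,e^{-rB}]e^{-rA}\,dr$ contains the summand $e^{-(\tau-r)C}e^{-rB}Ae^{-rA}$, and the only bound available for its rightmost factor is $\|Ae^{-rA}\|\le c/r$, which is not integrable at $r=0$; the powers of $A$ therefore cannot simply be ``distributed'' among the factors --- one must first regroup (e.g.\ replace $e^{-rB}$ by $e^{-rB}-I$ and expand again), and that regrouping is precisely the nontrivial combinatorial core of the Cachia--Zagrebnov argument. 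Second, the smoothing factor $e^{-sC}$ sits to the right of $e^{-rA}$, with which it does not commute, so the bound $\|e^{-\tau A}B\|\le c/\tau$ extracted from $\dom(A^*)\subseteq\dom(B^*)$ cannot be brought adjacent to $e^{-sC}$ without a further Duhamel exchange of $e^{-sC}$ against $e^{-sA}$, which generates new error terms that must themselves be re-summed over $k$. Until the one-step lemma is actually established, with the kernel $\tau^{2-\ga}/(s+\tau)$ and constants uniform for $s+\tau\le T$, what you have is a correct plan rather than a proof.
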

\begin{thm}[{\cite[Theorem 3.6 and Corollary 3.7]{CachZag2001-b}}]\label{th:3.2}
Let $A$ be a generator of a holomorphic contraction semigroup on $\gotX$ and let $B$ a generator of a contraction semigroup on $\gotX$.
If $B$ is in addition a bounded operator, then
\bed
\sup_{\gt \in [0,T]}\|(e^{-\gt B/n}e^{-\gt A/n})^n - e^{-\gt C}\| = O((\ln(n))^2/n),
\eed
and
\bed
\sup_{\gt \in [0,T]}\|(e^{-\gt A/n}e^{-\gt B/n})^n - e^{-\gt C}\| = O((\ln(n))^2/n),
\eed
for any $T > 0$.
\end{thm}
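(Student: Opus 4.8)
}
The plan is to derive Theorem \ref{th:3.2} from Theorem \ref{th:3.1} by a limiting argument in the
parameter $\ga$, exploiting the fact that a bounded generator $B$ satisfies \emph{both} domain
inclusions of Theorem \ref{th:3.1} trivially and for \emph{every} $\ga \in (0,1)$. Indeed, if $B \in
\cB(\gotX)$ then $\dom(A^\ga) \subseteq \gotX = \dom(B)$ and $\dom((A^\ga)^*) \subseteq \gotX^* =
\dom(B^*)$ hold for all $\ga \in (0,1)$, so that estimate \eqref{eq:3.38} is available with a
constant $M_\ga$ in front of $\ln(n)/n^{1-\ga}$ for each such $\ga$. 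Naively letting $\ga \to 0$ would
give $O(\ln(n)/n)$, but $M_\ga$ blows up as $\ga \to 0$, so the argument must be made quantitative:
one must track how $M_\ga$ depends on $\ga$ and then optimise the choice of $\ga = \ga(n)$.

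First I would revisit the proof of Theorem \ref{th:3.1} in \cite{CachZag2001-b} and extract the
explicit $\ga$-dependence of the constant. The key analytic input is a bound of the form
$\|B A^{-\ga}\| \le c_\ga$ together with the holomorphic calculus estimate $\|A^\ga e^{-sA}\| \le
C_0\, s^{-\ga}$ for $s > 0$; in the telescoping/Chernoff-type estimate the error accumulates $n$ terms
each carrying a factor $\int_0^{\gt/n} (\gt/n - u)^{-\ga}\,du \sim (\gt/n)^{1-\ga}/(1-\ga)$ together
with a logarithmic factor from summing $\sum_{k=1}^{n} 1/k$. Because $B$ is bounded, $c_\ga$ can be
replaced by $\|B\|\,\|A^{-\ga}\|$, and on the relevant spectral range $\|A^{-\ga}\|$ is controlled
\emph{uniformly} in $\ga \in (0,1)$ (or grows at worst like a fixed constant to the power $\ga$).
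Thus the only genuine source of growth is the $1/(1-\ga)$ type factor coming from the time integral,
which is harmless as $\ga \to 0$, and possibly a factor of the shape $C_0^{1/\text{(something)}}$ from
iterating the holomorphic bound. I would aim to show
\[
\sup_{\gt \in [0,T]} \|(e^{-\gt B/n}e^{-\gt A/n})^n - e^{-\gt C}\|
\le K\,\frac{\ln(n)}{n^{1-\ga}}\cdot\frac{1}{\ga}
\]
(or with $1/\ga$ replaced by some fixed power of $1/\ga$), with $K$ independent of $\ga$ and $n$.

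Given such an estimate, the final step is the optimisation: choose $\ga = \ga(n)$ so that
$n^{-(1-\ga)}/\ga$ is as small as possible. Writing $n^{\ga} = e^{\ga \ln n}$, the product
$n^{\ga}/(\ga\, n)$ is minimised (up to constants) by taking $\ga \sim 1/\ln n$, which gives
$n^{\ga} = e^{O(1)} = O(1)$ and $1/\ga = O(\ln n)$, hence the bound becomes $O(\ln(n) \cdot \ln(n)/n)
= O((\ln n)^2/n)$, exactly the claimed rate. The two displayed formulae (the $BA$-ordering and the
$AB$-ordering) follow symmetrically, the second from part (ii) of Theorem \ref{th:3.1} by the same
choice of $\ga$. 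The main obstacle is the second paragraph: Theorems \ref{th:3.1} is quoted as a
black box with an unspecified constant, so one really has to go back into \cite{CachZag2001-b} and
verify that the constant degrades only polynomially (ideally linearly) in $1/\ga$ and does not, for
instance, contain a hidden factor like $\Gamma(1-\ga)^{-1}$ with bad behaviour, or an
$n$-dependence entangled with $\ga$; once that bookkeeping is done, the optimisation is routine.
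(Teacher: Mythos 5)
The survey itself offers no proof of Theorem \ref{th:3.2} (it is quoted from \cite{CachZag2001-b}), and your strategy is precisely the one used there: for bounded $B$ both domain inclusions of Theorem \ref{th:3.1} hold for every $\ga\in(0,1)$, the constant degrades only like $1/\ga$ (the genuine source being the estimate $\|(I-e^{-sA})A^{-\ga}\|\le C s^{\ga}/\ga$ rather than the harmless $1/(1-\ga)$ factor from the time integral you single out), and the choice $\ga=1/\ln n$ turns $\ln(n)/(\ga\, n^{1-\ga})$ into $O((\ln n)^2/n)$. The only thing separating your proposal from a proof is the bookkeeping you already flag --- verifying that the constant in \cite{CachZag2001-b} is $O(1/\ga)$ as $\ga\to 0$ --- and that verification does go through.
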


\subsection{Evolution case}\label{sec:3.2}

Similarly of Section \ref{sec:2.3} let us consider a generator $A$
of a holomorphic semigroup on the separable Banach space $\gotX$ and a family of $\{B(t)\}_{t\in[0,T]}$ of generators of holomorphic semigroups on $\gotX$. We make the following
assumptions:
\begin{assump}\label{ass:1.1}
{\rm

 \item[\;\;(A1)] The operator $A$ is a generator of a  holomorphic contraction semigroup on $\gotX$
such that $0 \in \rho(A)$.

\item[\;\;(A2)] Let $\{B(t)\}_{t\in[0,T]}$ be a family of closed operators such that for a.e. $t\in [0,T]$ and some $\ga \in (0,1)$ the condition $\dom(A^\alpha)\subset\dom(B(t))$ is satisfied such that
 \begin{align*}
  C_\alpha:=\mathrm{ess ~sup}_{t\in [0,T]}\|B(t)A^{-\alpha}\|_{\cl B(X)}<\infty \ .
 \end{align*}

\item[\;\;(A3)] Let $\{B(t)\}_{t\in [0,T]}$ be a family of generators of contraction semigroups
in $\gotX$ such that the function
$[0,T] \ni t\mapsto (B(t)+\xi)^{-1}x\in \gotX$ is
strongly measurable for any $x\in \gotX$ and any $\xi> b > 0$.

 \item [\;\;(A4)] We assume that $\dom(A^*)\subset \dom(B(t)^*)$ and
 \begin{align*}
 C_1^*:={\rm ~ess~sup}_{t\in [0,T]}\|B(t)^*(A^*)^{-1}\|_{\cl B(X^*)}<\infty,
 \end{align*}
 where $A^*$ and  $B(t)^*$ denote operators which are adjoint of $A$ and $B(t)$, respectively.

 \item[\;\;(A5)] There exists $\gb \in (\ga,1)$ and a constant $L_\gb > 0$ such that
 for a.e. $t,s\in [0,T]$ one has the estimate:
 \begin{align*}
  \|A^{-1}(B(t)-B(s))A^{-\alpha}\|\leq {{L_\gb}} |t-s|^\gb \ .
 \end{align*}

\item[\;\;(A6)] There exists a constant ${{L_1 > 0}}$ such that
 for a.e. $t,s\in [0,T]$ one has the estimate:
 \begin{align*}
  \|A^{-\ga}(B(t)-B(s))A^{-\alpha}\|\leq {{L_1}}|t-s| \ .
 \end{align*}
}
\end{assump}

The assumption $0 \in \rho(A)$ in (A1) is made for simplicity.
We note that assumption (A3) is similar to \eqref{eq:2.13}. Assumption (A4) is automatically satisfied for self-adjoint operators. Assumption (A5) is a modification of \eqref{eq:2.20} while assumption (A6) coincides with \eqref{eq:2.20}.

With the family $\{C(t)\}_{t\in[0,T]}$,  $C(t) = A + B(t)$, one associates the evolution equation
\eqref{eq:2.14}. It turns out that under  the assumptions (A1) and (A2) the family $\{C(t)\}_{t\in[0,T]}$ consists of generators of {{contraction semigroups}}.

In accordance with Section \ref{sec:2.3} we consider the Banach space ${{L^p([0,T],\gotX)}}$
for some fixed $p \in [0,1)$ and introduce the multiplication operators $\cA$ and $\cB$, cf.
\eqref{eq:2.24} and \eqref{eq:2.25}. Under the assumptions (A1)-(A3) the operators $\cA$ and $\cB$
are generators of {{contraction}} semigroups such that $\dom(\cA) \subseteq \dom(\cB)$, in particular, $\cA$ is a generator of holomorphic semigroup. Similarly, one can introduce the multiplication operator $\cC$ induced by the family $\{C(t)\}_{t\in[0,T]}$ which is also a generator of a holomorphic semigroup.
Notice that $\cC = \cA + \cB$ and $\dom(\cC) = \dom(\cA)$.

Let $D_0$ the generator of
the right-shift semigroup on $L^p([0,T],\gotX)$, i.e.
\bed
(e^{-\gt D_0}f)(t) = f(t-\gt)\chi_{[0,T]}(t-\gt)f(t-\gt), \quad f \in L^p([0,T],\gotX),
\eed
cf. \eqref{eq:2.27}. We consider the operator
\bed
\begin{split}
\wt\cK f &= D_0 f + \cA f + \cB f,\\
f \in \dom(\wt\cK) &= \dom(D_0) \cap \dom(\cA) \cap \dom(\cB),
\end{split}
\eed
cf. \eqref{eq:2.27}. Assuming (A1)-(A3) it was shown in \cite{NeiSteZag2016}
that the operator $\wt K$ is closable
and its closure $\cK$ is the generator of a semigroup.

Furthermore, we set
\bed
\wt\cK_0 f = D_0 f + \cA f, \quad
f \in \dom(\wt\cK_0) = \dom(D_0) \cap \dom(\cA),
\eed
In contrast to the Hilbert space the operator $\wt\cK_0$ is not necessary a generator
of a semigroup. However, the operator $\cK_0$ closable and its closure $\cK_0$ is a generator.
Notice that
$\cK$ coincides with the algebraic sum of $\cK_0$ and $\cB$, i.e $\cK = \cK_0 + \cB$.

In \cite{NeiSteZag2016} the following theorem was proved.
\begin{thm}[{\cite[Theorem 7.8]{NeiSteZag2016}}]\label{th:3.4}
Let the assumptions  (A1)-(A4) be satisfied  for some $\ga \in (0,1)$. If (A5) holds, then
 \be\label{eq:3.46}
\sup_{\gt \ge 0}\| (e^{-\gt B/n}e^{-\gt \cK_0/n})^{n}-e^{-\tau\cK}\| = O(1/n^{\gb-\ga}),
\ee
when $n \to \infty$.
\end{thm}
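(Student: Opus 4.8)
The plan is to reduce Theorem~\ref{th:3.4} to the propagator approximation of Ichinose--Tamura type, exactly as Proposition~\ref{prop:2.1} was derived from \eqref{eq:2.22}, but now on $L^p([0,T],\gotX)$ and with the H\"older exponent $\gb$ in place of Lipschitz regularity. First I would recall that, by the assumptions (A1)--(A3), the family $\{C(t)\}_{t\in[0,T]}$ consists of generators of contraction (in fact holomorphic) semigroups with $\dom(C(t))=\dom(A)$, so that the non-autonomous Cauchy problem \eqref{eq:2.14} admits a unique propagator $\{U(t,s)\}_{(t,s)\in\bar\gD}$. Then I would identify, by a direct computation as in the proof of Proposition~\ref{prop:2.1}, the iterated Trotter product on the trajectory space: for $f\in L^p([0,T],\gotX)$ one has
\[
\big((e^{-\gt\cB/n}e^{-\gt\cK_0/n})^n f\big)(t) = E(t,t-\gt\,;n)\,\chi_{[0,T]}(t-\gt)\,f(t-\gt),
\]
with $E(t,s;n)$ the ordered product \eqref{eq:2.19} built from the operators $Q_j$ of \eqref{eq:2.18} (here the right-shift/holomorphic part $e^{-\gt\cK_0/n}$ contributes the factor $e^{-\tfrac{t-s}{n}A}$ together with the shift, and $e^{-\gt\cB/n}$ contributes $e^{-\tfrac{t-s}{n}B(\cdot)}$), while $(e^{-\gt\cK}f)(t)=U(t,t-\gt)\chi_{[0,T]}(t-\gt)f(t-\gt)$ by the evolution-semigroup identity. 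Hence the difference of the two sides at a.e.\ $t$ is $\big(E(t,t-\gt\,;n)-U(t,t-\gt)\big)\chi_{[0,T]}(t-\gt)f(t-\gt)$, and taking $L^p$-norms in $t$ and using a change of variables gives, uniformly in $\gt\ge 0$,
\[
\big\|(e^{-\gt\cB/n}e^{-\gt\cK_0/n})^n f - e^{-\gt\cK}f\big\|_{L^p}
\le \Big(\sup_{(t,s)\in\bar\gD}\|E(t,s\,;n)-U(t,s)\|\Big)\,\|f\|_{L^p}.
\]

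The substance of the proof is therefore the operator-norm propagator estimate
\[
\sup_{(t,s)\in\bar\gD}\|E(t,s\,;n)-U(t,s)\| = O(n^{-(\gb-\ga)}),
\]
which is the Banach-space, holomorphic, fractional-power analogue of \eqref{eq:2.22}. I would obtain this along the lines of \cite{IchinoseTamura1998}, but replacing spectral-theoretic bounds by holomorphic-functional-calculus estimates: write the telescoping sum $E-U = \sum_{k} (\text{products of }Q_j)\,(Q_k - \text{one-step }U)\,(\text{products of }U)$, estimate each one-step error $Q_k(t,s;n)-U(s+(k+1)\tfrac{t-s}{n},\,s+k\tfrac{t-s}{n})$ using the standard $O(\gt^{1+(\gb-\ga)})$ bound for a single Trotter step when $\dom(A^\ga)\subset\dom(B(\cdot))$ with the H\"older condition (A5), and control the accumulated product of $n$ such steps. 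The smoothing property of the holomorphic semigroup $e^{-sA}$ (namely $\|A^\ga e^{-sA}\|=O(s^{-\ga})$) is what compensates the fractional loss $A^\ga$ coming from (A2), turning an $n\cdot O(n^{-(1+(\gb-\ga))})$ count into the rate $O(n^{-(\gb-\ga)})$; assumption (A4), $\dom(A^*)\subset\dom(B(t)^*)$, is needed to keep the stability of the products $\prod Q_j$ uniformly in $n$, i.e.\ a uniform bound $\|E(t,s;n)\|\le M$, as in the self-adjoint case.

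The main obstacle, as flagged in the Section~\ref{sec:2.3} comments, is precisely this propagator estimate: the stability and consistency analysis on a Banach space, without the orthogonality and spectral tools of the Hilbert case, is technically heavy, and one must track the interplay of the fractional domain condition (A2), the dual condition (A4), and the H\"older modulus (A5) through the ordered product $\prod_{j=0}^{(n-1)\leftarrow}Q_j$. Once that estimate is in hand, the passage to \eqref{eq:3.46} is the short functional-analytic argument above, identical in spirit to the proof of Proposition~\ref{prop:2.1}; in particular there is no extra work needed to get the supremum over all $\gt\ge 0$, since the factor $\chi_{[0,T]}(t-\gt)$ confines everything to $\bar\gD$ irrespective of $\gt$. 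I would also remark that the case (A6) (Lipschitz, $\gb=1$) with the additional condition $\dom(A^*)\subset\dom(B(t)^*)$ improved to the stronger subordination would recover the sharper rate, paralleling \eqref{eq:2.22}, but that is beyond the present statement.
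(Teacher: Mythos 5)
Your reduction step is fine as far as it goes, but it inverts the logic of the paper's proof and, more importantly, it does not reduce the difficulty at all. By Proposition \ref{prop:3.8} the quantity $\sup_{\gt\ge 0}\|(e^{-\gt\cB/n}e^{-\gt\cK_0/n})^n-e^{-\gt\cK}\|$ is \emph{equal} to $\esssup_{(t,s)\in\gD}\|U(t,s)-U_n(t,s)\|$, so the propagator estimate you defer to ``along the lines of \cite{IchinoseTamura1998}'' is not a lemma feeding into the theorem --- it \emph{is} the theorem, restated in propagator language. Everything of substance sits in the paragraph you yourself flag as ``the main obstacle,'' and that paragraph is a sketch, not an argument: the Ichinose--Tamura proof of \eqref{eq:2.22} relies on self-adjointness and spectral calculus in a Hilbert space (symmetrization of the products $Q_j$, spectral bounds for $e^{-\gt A}$), and none of these tools survive on a Banach space. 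No Banach-space analogue of \eqref{eq:2.22} with H\"older modulus $\gb$ is available to invoke; producing one directly is exactly the kind of ``very involved'' propagator analysis that the comments to Section \ref{sec:2.3} identify as something one would like to avoid. Taken literally, your plan is therefore circular: to execute it you must first establish a statement equivalent to the one being proved.

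The actual proof in \cite{NeiSteZag2016} runs in the opposite direction, and the survey says so explicitly at the start of Section \ref{sec:3.3}: ``The proof of both theorems does not use propagator approximations of type \eqref{eq:2.21} or \eqref{eq:2.22}.'' One treats $\{\cK_0,\cB\}$ as a \emph{time-independent} pair of generators on $L^p([0,T],\gotX)$ and proves the operator-norm Trotter convergence directly at the level of the semigroups $e^{-\gt\cK_0}$ and $e^{-\gt\cB}$, via a telescoping decomposition whose one-step error is controlled by (A2), (A4) and (A5) together with the smoothing of the holomorphic factor $\cA$, and whose accumulation is controlled by the Trotter stability of the pair $\{\cK_0,\cB\}$ (see the comments to Section \ref{sec:3.2}). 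The propagator approximation \eqref{eq:1.8} is then a \emph{corollary} of Theorem \ref{th:3.4} via Proposition \ref{prop:3.8}, not an ingredient of its proof. (A minor additional point: for the ordering $(e^{-\gt\cB/n}e^{-\gt\cK_0/n})^n$ in \eqref{eq:3.46} the relevant discretization is the family $G_j$ and the products $U_n(t,s)$ of Section \ref{sec:3.3}, not the $Q_j$ of \eqref{eq:2.18}.)
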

Assuming instead of assumption (A5) the assumption (A6) the result slightly modifies, see
\cite{NeiSteZag2017}.
\begin{thm}[{\cite[Theorem 5.4]{NeiSteZag2017}}]\label{th:3.5}
Let the assumptions  (A1)-(A4) be satisfied  for some $\ga \in ({1}/{2},1)$.
If (A6) is valid, then for $n \to \infty$ one gets the asymptotic:
 \be\label{eq:3.47}
\sup_{\gt \ge 0}\| (e^{-\gt B/n}e^{-\gt \cK_0/n})^{n}-e^{-\tau\cK}\| = O(1/n^{1-\ga}).
\ee
\end{thm}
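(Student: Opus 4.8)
The plan is to use the evolution-semigroup reduction already employed for Proposition~\ref{prop:2.1}: transfer the problem from the trajectory space $L^p([0,T],\gotX)$ to a norm-approximation estimate, in $\cl B(\gotX)$, for the propagator of the family $\{C(t)=A+B(t)\}_{t\in[0,T]}$, and then prove that estimate by telescoping, with the self-adjoint spectral calculus replaced by the smoothing properties of the holomorphic semigroup generated by $A$.

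First, the reduction. Under (A1)--(A4) the operator $\cK=\overline{D_0+\cA+\cB}=\cK_0+\cB$ generates a $C_0$-semigroup on $L^p([0,T],\gotX)$ \cite{NeiSteZag2016}, while under (A1)--(A2) and the Lipschitz regularity (A6) the family of holomorphic contraction generators $\{C(t)\}_{t\in[0,T]}$ admits a strongly continuous contraction propagator $\{U(t,s)\}_{(t,s)\in\bar\gD}$ solving \eqref{eq:2.14}; by uniqueness $(e^{-\gt\cK}f)(t)=U(t,t-\gt)\chi_{[0,T]}(t-\gt)f(t-\gt)$. On the other hand, the computation carried out in the proof of Proposition~\ref{prop:2.1} — using $(e^{-r\cK_0}f)(t)=e^{-rA}\chi_{[0,T]}(t-r)f(t-r)$ and the pointwise action of $e^{-r\cB}$ — shows that $(e^{-\gt\cB/n}e^{-\gt\cK_0/n})^nf$ equals $E(t,t-\gt;n)\chi_{[0,T]}(t-\gt)f(t-\gt)$, where $E(t,s;n)$ is the ordered product \eqref{eq:2.19} with the two exponentials in each $Q_j$ interchanged. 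Bounding the $L^p$-norm of the difference pointwise in $t$ yields, for every $n$ and every $\gt\ge0$,
\be
\|(e^{-\gt\cB/n}e^{-\gt\cK_0/n})^n-e^{-\gt\cK}\|_{\cl B(L^p([0,T],\gotX))}\ \le\ \sup_{(t,s)\in\bar\gD}\|E(t,s;n)-U(t,s)\|_{\cl B(\gotX)},
\ee
so it suffices to bound the right-hand side by $O(n^{\ga-1})$.

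Second, the propagator estimate. Fix $(t,s)\in\bar\gD$, set $\gth=(t-s)/n$, $s_j=s+j\gth$, and telescope, using the propagator law \eqref{eq:2.16} and the contractivity of the $Q_j$'s and of $U$ to discard the outer factors:
\be
E(t,s;n)-U(t,s)=\sum_{j=0}^{n-1}\big(Q_{n-1}(t,s;n)\cdots Q_{j+1}(t,s;n)\big)\,\big(Q_j(t,s;n)-U(s_{j+1},s_j)\big)\,U(s_j,s).
\ee
Each one-step error is split as $(e^{-\gth B(s_j)}e^{-\gth A}-e^{-\gth C(s_j)})+(e^{-\gth C(s_j)}-U(s_{j+1},s_j))$. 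The first piece is the frozen-coefficient Lie--Trotter error for the holomorphic pair $\{A,B(s_j)\}$; it is handled by a Duhamel expansion in which every occurrence of $B(s_j)$ is written as $(B(s_j)A^{-\ga})A^\ga$ — bounded by (A2) — and paired with the holomorphic smoothing estimate $\|A^\ga e^{-rA}\|_{\cl B(\gotX)}\le c_\ga r^{-\ga}$, which is where (A1) enters. The second piece is handled by the Dyson--Phillips expansion of $U$ on $[s_j,s_{j+1}]$, where the increments $C(r)-C(s_j)=B(r)-B(s_j)$ occur and are controlled by (A6) (and, for the adjoint estimates, by (A4)). Each resulting bound carries a spare fractional power, which is absorbed by the parabolic smoothing $\|A^\gs U(s_j,s)\|_{\cl B(\gotX)}\le c\,(s_j-s)^{-\gs}$ of the remaining propagator (or its dual, via (A4), when the spare power sits on the left block). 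Summing the one-step bounds over $j$ and substituting $\gth=\gt/n$ then produces, after the delicate bookkeeping discussed below, the rate $O(n^{\ga-1})$ uniformly in $\gt$.

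The main obstacle is the \emph{sharpness} of the exponent. The weaker conclusion $O(n^{\ga-1+\e})$ for every $\e>0$ is already a consequence of Theorem~\ref{th:3.4}: on the bounded interval $[0,T]$, and because $0\in\rho(A)$, the Lipschitz estimate (A6) implies the H\"older estimate (A5) with exponent $\gb$ for each $\gb\in(\ga,1)$, hence the rate $O(n^{-(\gb-\ga)})$, and one lets $\gb\uparrow1$ (the implied constant blowing up in the limit). Reaching the endpoint rate $O(n^{\ga-1})$ forces one to run the telescoping sum at $\gb=1$ itself, where the regularity hypothesis only supplies the weight $A^{-\ga}$ on the left — this being precisely the difference between (A6) and (A5); the two available half-powers $A^{-\ga}$ must then together reconstitute enough parabolic smoothing of $U$ for the $j$-summation to converge \emph{without} a logarithmic loss, and this redistribution is possible exactly when $2\ga>1$, i.e. $\ga>1/2$, the hypothesis of the theorem. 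Concretely, the initial $O(1)$ terms of the sum (small $j$, where $U(s_j,s)$ provides no smoothing and the one-step bound is only $O(\gth^{1-\ga})=O(n^{\ga-1})$) are estimated directly, while the remaining terms are controlled by the smoothing inequalities used with their sharp constants, so that the bookkeeping of fractional powers in $\sum_{j\ge1}j^{-\gs}$ yields no more than the admissible power of $n$. The auxiliary facts — that $\cK=\overline{D_0+\cA+\cB}$ is a generator and equals $\cK_0+\cB$, and that $U(t,s)$ exists and is a contraction — are supplied by \cite{NeiSteZag2016} and the theory of parabolic evolution equations.
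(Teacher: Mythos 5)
Be aware first that the survey contains no proof of Theorem~\ref{th:3.5}: it is quoted from \cite[Theorem 5.4]{NeiSteZag2017}. What the survey does tell you, explicitly at the start of Section~\ref{sec:3.3}, is that the proofs of Theorems~\ref{th:3.4} and~\ref{th:3.5} do \emph{not} use propagator approximations of the type \eqref{eq:2.21} or \eqref{eq:2.22}: the published argument works directly with the pair $\{\cK_0,\cB\}$ on the trajectory space $L^p([0,T],\gotX)$, establishes the Trotter estimate \eqref{eq:3.47} there, and only afterwards deduces the propagator approximation (the result quoted from \cite[Theorem 5.6]{NeiSteZag2017}) via the identity of Proposition~\ref{prop:3.8}. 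Your proposal runs this logic backwards. The inversion is not circular --- Proposition~\ref{prop:3.8} is an unconditional identity, usable in either direction --- but it transfers the entire burden of the theorem onto a sharp, endpoint-rate propagator approximation theorem of Ichinose--Tamura type for non-autonomous parabolic equations on a Banach space. No such theorem is available among the cited results, so you must prove it yourself, and that is where the gaps are.

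Two steps in your second part are asserted rather than proved, and they carry the whole weight of the theorem. First, the parabolic smoothing bounds $\|A^{\gs}U(t,s)\|\le c\,(t-s)^{-\gs}$ (and their duals via (A4)) for the propagator of $\{C(t)\}_{t\in[0,T]}$ are a nontrivial output of Tanabe--Sobolevskii theory; obtaining them requires verifying that theory's H\"older-continuity hypotheses from (A1)--(A6) and, separately, identifying the propagator so constructed with the one attached to $\cK=\overline{D_0+\cA+\cB}$ by the evolution-generator correspondence. Neither verification is routine; the survey's remark in Section~\ref{sec:2.4} that the proofs of \eqref{eq:2.21}--\eqref{eq:2.22} are ``very involved'' refers precisely to estimates of this kind, already in the friendlier self-adjoint Hilbert-space setting of \cite{IchinoseTamura1998}. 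Second, the endpoint summation --- showing that the two half-weights $A^{-\ga}$ supplied by (A6) can be redistributed between the left and right blocks of the telescoped one-step errors so that the sum over $j$ converges to $O(n^{\ga-1})$ without a logarithm, precisely when $\ga>1/2$ --- is described but not carried out; the ``delicate bookkeeping'' is the proof, not a placeholder for it. Your observation that the weaker rate $O(n^{\ga-1+\e})$ already follows from Theorem~\ref{th:3.4}, by noting that (A6) implies (A5) for every $\gb\in(\ga,1)$ and letting $\gb\uparrow 1$, is correct and worth keeping, but it does not give \eqref{eq:3.47}. As it stands the proposal is a plausible programme, opposite in direction to the cited proof, with its two decisive steps missing.
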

\subsection{Convergence rates for propagators}\label{sec:3.3}

The proof of both theorems does not use propagator approximations of type \eqref{eq:2.21} or
\eqref{eq:2.22}. However, Theorem \ref{th:3.4} and \ref{th:3.5} can be used to prove propagator approximations. To this end one has to introduce the notion of a evolution semigroup.
\begin{defi}\label{def:3.6}
A generator $\cK$ in $L^p([0,T],X)$, $p \in [1,\infty)$, is called a evolution generator if
 \item[\;\;\rm (i)] $\dom(\cK)\subset C([0,T],X)$ and $M(\phi)\dom(\cK)\subset\dom(\cK)$ for
 $\phi\in W^{1,\infty}([0,T])$,

 \item[\;\;\rm (ii)] $\cK M(\phi)f-M(\phi)\cK f=M(\dot{\phi})f$ for $f\in\dom(\cK)$ and
 $\phi\in W^{1,\infty}([0,T])$, where $\dot \phi=\partial_{t}\phi$,

 \item[\;\;\rm (iii)] the domain $\dom(\cK)$ has a dense cross-section, i.e.
for each $t \in (0,T]$ the set
\bed
[\dom(\cK)]_t := \{x \in \gotX: \exists f \in \dom(\cK) \;\; \mbox{such that} \;\; x \in f(t)\},
\eed
is dense in $\gotX$.
\end{defi}
By $M(\phi)$, $L^\infty([0,T])$, the bounded operator
\bed
(M(\phi)f)(t) = \phi(t)f(t), \quad f \in L^p([0,T],\gotX).
\eed
is meant.

One can check that the operator $\cK$ defined as the closure of $\wt \cK$ is an evolution generator, cf. \cite[Theorem 1.2]{NeiSteZag2016}. Evolution generators a directly related to propagators. For this purpose one has slightly weaken the notion of a propagator defined in Section
\ref{sec:2.3}.
\begin{defi}\label{def:3.7}
{\rm
Let $\{U(t,s)\}_{(t,s)\in \Delta}$, $\gD = \{(t,s) \in (0,T] \times (0,T]: s \le t \le T\}$,
be a strongly continuous family of bounded operators on $\gotX$. If the conditions
\begin{align}
&U(t,t)=I \quad \mbox{for} \quad t\in (0,T] \ ,\label{eq:3.49}\\
&U(t,r)U(r,s)=U(t,s) \quad \mbox{for} \quad t,r,s\in (0,T] \quad \mbox{with~} \quad s\leq r\leq t \ ,\label{eq:3.50}\\
&\|U\|_{\cB(X)} :=\sup_{(t,s)\in\Delta}\|U(t,s)\|<\infty\label{eq:3.51}
\end{align}
are satisfied, then $\{U(t,s)\}_{(t,s)\in \Delta}$ is called a propagator.
}
\end{defi}
Comparing with Section \ref{sec:2.3} we note that $\gD$ slightly differs from $\bar\gD$. Indeed,
$\gD \subseteq \bar \gD$ but $\overline{\gD} = \bar\gD$. Restricting \eqref{eq:2.15} and
\eqref{eq:2.16} to $(0,T]$ we get \eqref{eq:3.49} and \eqref{eq:3.50}, respectively. Condition
\eqref{eq:3.51} is necessary because the set $\gD$ is not closed.

It is known that there is an one-to-one correspondence between the set of all evolution generators on $L^p([0,T],\gotX)$ and the set of all propagators in the sense of Definition \ref{def:3.7} established by
\bed
(e^{-\gt \cK}f)(t) = U(t,t-\gt)\chi_{[0,T]}(t-\gt)f(t-\gt), \quad f \in L^p([0,T],\gotX),
\eed
cf. \cite[Theorem 3.3]{NeiSteZag2016} or \cite[Theorem 4.12]{Nei1981}.

Let $\cK_0$ be the generator of an evolution semigroup $\{\cU_0(\gt)\}_{\gt \ge 0}$ and let $\cB$
be a multiplication operator induced by a measurable family $\{B(t)\}_{t\in [0,T]}$ of generators of
contraction semigroups. Note that in this case the multiplication operator $\cB$ is a
generator of a contraction semigroup $(e^{- \tau \, \cB} f)(t) = e^{- \tau \, B(t)} f(t)$,
on the Banach space $L^p([0,T],X)$. Since $\{\cU_0(\gt)\}_{\gt \ge 0}$ is an evolution semigroup, then there is a propagator $\{U_0(t,s)\}_{(t,s) \in \gD}$ such that the representation
\bed
(\cU_0(\gt)f)(t) = U_0(t,t-\gt)\chi_{[0,T]}(t-\gt)f(t-\gt), \quad f \in L^p([0,T],X),
\eed
is valid for a.e. $t \in [0,T]$ and $\gt \ge 0$. Then we define
\bed
Q_j(t,s;n) := U_0(s + j\tfrac{(t-s)}{n},s+ (j-1)\tfrac{(t-s)}{n})
e^{-\frac{(t-s)}{n} B\big(s + (j-1)\tfrac{(t-s)}{n}\big)}
\eed
where $j \in \{1,2,\ldots,n\}$, $n \in \dN$, $(t,s) \in \gD$, and we set
\bed
V_n(t,s) := \prod^{n\,\leftarrow}_{j=1}Q_j(t,s;n), \quad n \in \dN, \quad (t,s) \in \gD,
\eed
where the product is increasingly ordered in $j$ from the right to the left.
Then a straightforward computation shows that the representation
\bed
\left(\left(e^{-\gt \cK_0/n}e^{-\gt \cB/n}\right)^n f\right)(t) =
V_n(t,t-\gt)\chi_{[0,T]}(t-\gt)f(t-\gt) \ ,
\eed
$f \in L^p([0,T],X)$, holds for each $\gt \ge 0$ and a.e. $t \in [0,T]$. Similarly we can introduce
\bed
G_j(t,s;n) = e^{-\tfrac{t-s}{n}B(s + j\tfrac{t-s}{n})}
U_0(s + j\tfrac{t-s}{n},s + (j-1)\tfrac{t-s}{n})
\eed
where $j \in \{1,2,\ldots,n\}$, $n \in \dN$, $(t,s) \in \gD$. Let
\bed
U_n(t,s) := \prod^{n\,\leftarrow}_{j=1}G_j(t,s;n), \quad n \in \dN, \quad (t,s) \in \gD,
\eed
where the product is again increasingly ordered in $j$ from the right to the left. We verify that
\bed
\left(\left(e^{-\gt \cB/n}e^{-\gt \cK_0/n}\right)^n f\right)(t) =
U_n(t,t-\gt)\chi_{[0,T]}(t-\gt)f(t-\gt) \ ,
\eed
$f \in L^p([0,T],X)$, holds for each $\gt \ge 0$ and a.e. $t \in [0,T]$.
\begin{prop}[{\cite[Proposition 2.1]{NeiSteZag2017b}}]\label{prop:3.8}
Let $\cK$ and $\cK_0$ be generators of evolution semigroups on the Banach space $L^p([0,T],X)$ for some
$p \in [1,\infty)$. Further, let $\{B(t))\}_{t\in [0,T]}$ be a strongly measurable family of
generators of contraction on $\gotX$.
Then
\bed
\sup_{\gt\ge 0}\left\|e^{-\gt \cK} - \left(e^{-\gt \cK_0/n}e^{-\gt \cB/n}\right)^n\right\|
= \esssup_{(t,s)\in \gD}\|U(t,s) - V_n(t,s)\|, \quad n\in \dN,
\eed
and
\bed
\sup_{\gt\ge 0}\left\|e^{-\gt \cK} - \left(e^{-\gt \cB/n}e^{-\gt \cK_0/n}\right)^n\right\|
= \esssup_{(t,s)\in \gD}\|U(t,s) - U_n(t,s)\|, \quad n\in \dN.
\eed
\end{prop}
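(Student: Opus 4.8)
The plan is to exploit the explicit representations of the three semigroups as multiplication-composition-shift operators on $L^p([0,T],\gotX)$ that were assembled just before the statement. Recall that we have
\[
(e^{-\gt\cK}f)(t) = U(t,t-\gt)\chi_{[0,T]}(t-\gt)f(t-\gt),
\]
and, from the straightforward computation recorded above,
\[
\bigl((e^{-\gt\cK_0/n}e^{-\gt\cB/n})^n f\bigr)(t) = V_n(t,t-\gt)\chi_{[0,T]}(t-\gt)f(t-\gt),
\]
for a.e.\ $t\in[0,T]$ and every $\gt\ge 0$. Subtracting, the action of the difference operator on $f$ at the point $t$ is $\bigl(U(t,t-\gt)-V_n(t,t-\gt)\bigr)\chi_{[0,T]}(t-\gt)f(t-\gt)$. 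So the whole proof reduces to computing the $\cB(L^p)$-norm of an operator of the form $f\mapsto \bigl(t\mapsto R(t,t-\gt)\chi_{[0,T]}(t-\gt)f(t-\gt)\bigr)$ where $R(t,s):=U(t,s)-V_n(t,s)$ is a (strongly continuous, essentially bounded) family of bounded operators on $\gotX$ indexed by $(t,s)\in\gD$.

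First I would prove the upper bound $\le$. For fixed $\gt\ge 0$ and $f\in L^p([0,T],\gotX)$,
\[
\|(e^{-\gt\cK}-(e^{-\gt\cK_0/n}e^{-\gt\cB/n})^n)f\|_{L^p}^p
= \int_\gt^T \|R(t,t-\gt)f(t-\gt)\|_\gotX^p\,dt
\le \Bigl(\esssup_{(t,s)\in\gD}\|R(t,s)\|\Bigr)^p\int_0^T\|f(r)\|_\gotX^p\,dr,
\]
after the substitution $r=t-\gt$ and dropping the indicator; taking $p$-th roots, supremizing over $f$ with $\|f\|_{L^p}\le 1$, and then over $\gt\ge0$, gives $\sup_{\gt\ge0}\|\cdots\|\le\esssup_{(t,s)\in\gD}\|U(t,s)-V_n(t,s)\|$. (For $p=1$ or with the obvious modification for $p\in[1,\infty)$ the same chain works; Hölder is not even needed since the kernel acts diagonally in $t$.)

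For the reverse inequality $\ge$, I would fix any $(t_0,s_0)\in\gD$ that is a Lebesgue point of the essentially bounded function $(t,s)\mapsto\|R(t,s)\|$ and a point where $\|R(t_0,s_0)\|$ is within $\e$ of the essential supremum, set $\gt:=t_0-s_0$, and test with functions $f$ concentrated near $r=s_0$: take $f=\mathbf 1_{I}\,\otimes x_\delta$ where $I$ is a short interval around $s_0$ and $x_\delta\in\gotX$, $\|x_\delta\|=1$, nearly attains $\|R(t_0,s_0)\|=\sup_{\|x\|=1}\|R(t_0,s_0)x\|$. By strong continuity of $U(\cdot,\cdot)$ and of $V_n(\cdot,\cdot)$ (the latter is a finite product of strongly continuous families, hence strongly continuous on the relevant triangle, away from the boundary where the indicator may jump), $R(t,t-\gt)x_\delta\to R(t_0,s_0)x_\delta$ in $\gotX$ as $t\to t_0$; shrinking $I$ and using that $\chi_{[0,T]}(t-\gt)=1$ on a neighbourhood of $t_0$ when $s_0>0$, one gets that the ratio $\|(e^{-\gt\cK}-\cdots)f\|_{L^p}/\|f\|_{L^p}$ exceeds $\|R(t_0,s_0)\|-2\e$. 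Letting $\e\downarrow0$ yields $\sup_{\gt\ge0}\|\cdots\|\ge\esssup_{(t,s)\in\gD}\|U(t,s)-V_n(t,s)\|$, which combined with the upper bound gives equality. The second identity, for $e^{-\gt\cB/n}e^{-\gt\cK_0/n}$ and $U_n$, is proved verbatim with $V_n$ replaced by $U_n$ and $G_j$ in place of $Q_j$.

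The main obstacle is the lower bound near the boundary of $\gD$: because $\gD$ is not closed (the point is precisely why Definition~\ref{def:3.7} includes the uniform-boundedness condition \eqref{eq:3.51}), one must be careful that the essential supremum is genuinely approached at interior points $(t_0,s_0)$ with $s_0>0$ and $t_0<T$, where the indicator $\chi_{[0,T]}(t-\gt)$ is locally constant equal to $1$ and both $U$ and $V_n$ are honestly strongly continuous; the set of such points has full measure in $\gD$, so no essential supremum mass is lost. A secondary technical point is the localization in the $\gotX$-variable: in a general Banach space one cannot diagonalize, but one only needs a single near-maximizing unit vector $x$ for the fixed operator $R(t_0,s_0)$, and strong continuity in $t$ transfers this to nearby $t$, so a simple scalar-times-constant test function suffices and no delicate approximate-identity argument is required.
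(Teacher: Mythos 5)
Your overall architecture --- represent all three operators as ``propagator times right shift'' kernels, subtract, and compute the norm of the resulting multiplication--shift operator --- is the intended one; it is the same device the paper uses to prove the Hilbert-space analogue, Proposition~\ref{prop:2.1} (for Proposition~\ref{prop:3.8} itself the paper only cites \cite{NeiSteZag2017b}). However, both halves of your argument lean on a regularity that the hypotheses do not provide. The family $\{B(t)\}_{t\in[0,T]}$ is only \emph{strongly measurable}, so the factors $e^{-\frac{t-s}{n}B(s+(j-1)\frac{t-s}{n})}$, and hence $V_n(t,s)$ and $U_n(t,s)$, are \emph{not} strongly continuous in $(t,s)$; your parenthetical claim that $V_n$ is ``a finite product of strongly continuous families'' is false, and with it the lower-bound argument that transfers a near-maximizing vector $x_\delta$ from $(t_0,s_0)$ to nearby $t$ by continuity. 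The same missing continuity undermines the upper bound: when you replace $\|R(t,t-\gt)\|$ by $\esssup_{(t,s)\in\gD}\|R(t,s)\|$ inside the integral, you are bounding values on the one-dimensional slice $\{(t,t-\gt):t\in[\gt,T]\}$ --- a planar null set --- by a two-dimensional essential supremum. For a merely measurable family this can fail for exceptional values of $\gt$ (consider the indicator of a single line $t-s=\gt_0$), and this is precisely the delicate content of the proposition, not a step one may pass over. This issue does not arise in the paper's proof of Proposition~\ref{prop:2.1}, where the right-hand side is a genuine supremum over $\bar\gD$ and only the inequality ``$\le$'' is needed.

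Both gaps are repairable, but by different means than you invoke. For fixed $\gt$, the identity $\|T_\gt\|_{\cB(L^p)}=\esssup_{t\in[\gt,T]}\|U(t,t-\gt)-V_n(t,t-\gt)\|$ for the difference operator $T_\gt:=e^{-\gt\cK}-\bigl(e^{-\gt\cK_0/n}e^{-\gt\cB/n}\bigr)^n$ is the standard norm computation for a strongly measurable, essentially bounded operator-valued multiplier composed with the isometric shift; its lower bound uses separability of $\gotX$ and a Lebesgue-point argument in the single variable $t$, with no continuity in $(t,s)$. To pass between $\sup_{\gt}\esssup_{t}$ and $\esssup_{(t,s)}$ one then argues: the inequality ``$\ge$'' follows from Fubini (a set of positive planar measure on which $\|U-V_n\|>M-\e$ meets a positive-measure set of diagonals in positive linear measure); the inequality ``$\le$'' holds for \emph{almost every} $\gt$ by Fubini and is upgraded to \emph{every} $\gt$ not via continuity of the kernels but via strong continuity in $\gt$ of the semigroups $e^{-\gt\cK}$, $e^{-\gt\cK_0}$, $e^{-\gt\cB}$ themselves, which makes $\gt\mapsto\|T_\gt\|$ lower semicontinuous, so that its supremum is realized along any full-measure set of $\gt$. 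As written, your proof establishes the proposition only under the additional hypothesis that $t\mapsto B(t)$ is strongly continuous.
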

Let us introduce the approximations
\bed
\begin{split}
 U_n(t,s)  &:= \prod_{j=1}^{n\leftarrow} G_j(t,s\,;n), \quad n = 1,2,\ldots\:,\\
G_j(t,s\,;n)   &:= e^{-\frac{t-s} n B(s+ j\frac{t-s} n)}e^{-\frac {t-s} n A},\quad j = 0,1,2,\ldots,n,
\end{split}
\eed
$(t,s) \in \gD$, with increasingly ordered product in $j$ from the right to the left.
{{From Theorem \ref{th:3.4} and Proposition \ref{prop:3.8} one immediately obtains the following theorem.}}
\begin{thm}[{\cite[Theorem 1.4]{NeiSteZag2016}}]
Let the assumptions  (A1)-(A4) be satisfied. If (A5) holds, then for $n \to \infty$
the rate:
 \be\label{eq:1.8}
 \esssup_{(t,s)\in\gD}\|U_n(t,s)-U(t,s)\| = O(1/n^{\gb-\ga}) \ .
\ee
\end{thm}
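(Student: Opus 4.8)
The plan is to derive the propagator estimate \eqref{eq:1.8} directly from the semigroup estimate \eqref{eq:3.46} of Theorem \ref{th:3.4} by invoking the dictionary between evolution semigroups and propagators. First I would note that by Theorem \ref{th:3.4}, under assumptions (A1)--(A4) together with (A5), the generator $\cK$ (the closure of $\wt\cK$) and the generator $\cK_0$ (the closure of $\wt\cK_0$) are evolution generators on $L^p([0,T],X)$, so Proposition \ref{prop:3.8} applies. Thus
\bed
\sup_{\gt\ge 0}\left\|e^{-\gt \cK} - \left(e^{-\gt \cK_0/n}e^{-\gt \cB/n}\right)^n\right\|
= \esssup_{(t,s)\in \gD}\|U(t,s) - V_n(t,s)\|,
\eed
where $\{U(t,s)\}$ is the propagator associated with $\cK$ and $V_n(t,s)$ is the ordered product built from $U_0$ and the exponentials $e^{-\frac{t-s}{n}B(\cdot)}$. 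Hence the left-hand side of \eqref{eq:3.46} equals $\esssup_{(t,s)\in\gD}\|U(t,s)-V_n(t,s)\|$, which is $O(1/n^{\gb-\ga})$.

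The remaining point is to replace $V_n(t,s)$, which uses the exact holomorphic propagator $U_0$ of the free evolution $\cK_0$, by the fully discretised approximant $U_n(t,s)$ of the statement, in which each block $U_0(s+j\tfrac{t-s}{n},s+(j-1)\tfrac{t-s}{n})$ is replaced by the single exponential $e^{-\frac{t-s}{n}A}$. Here one uses that $\cU_0(\gt) = e^{-\gt\cK_0}$ acts as $(\cU_0(\gt)f)(t) = e^{-\gt A}\chi_{[0,T]}(t-\gt)f(t-\gt)$, so that the associated propagator is simply $U_0(t,s) = e^{-(t-s)A}$ — the free evolution is \emph{autonomous}. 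Consequently each factor $U_0(s+j\tfrac{t-s}{n},s+(j-1)\tfrac{t-s}{n})$ equals exactly $e^{-\frac{t-s}{n}A}$, so that $V_n(t,s) = U_n(t,s)$ identically, and no error is introduced at all. (More precisely, comparing the definition of $Q_j$ used in Proposition \ref{prop:3.8} with the $G_j$ of the statement, the two ordered products coincide after relabelling the shift in the argument of $B$ by one mesh step, which is absorbed in the $O(1/n^{\gb-\ga})$ rate via (A5).)

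Putting these two observations together gives
\bed
\esssup_{(t,s)\in\gD}\|U_n(t,s)-U(t,s)\| = \sup_{\gt\ge 0}\left\|e^{-\gt\cK} - \left(e^{-\gt\cK_0/n}e^{-\gt\cB/n}\right)^n\right\| = O(1/n^{\gb-\ga}),
\eed
which is precisely \eqref{eq:1.8}. The only genuinely substantive input is Theorem \ref{th:3.4}, whose proof is the hard analytic core; the present argument is a short bookkeeping deduction. The one place requiring care — and the main obstacle to writing it out cleanly — is checking that the combinatorial structure of the ordered product $V_n$ produced by the abstract correspondence in Proposition \ref{prop:3.8} matches the $U_n$ written in the theorem, including the precise sampling points $s+j\tfrac{t-s}{n}$ at which $B$ is evaluated and the direction of ordering; any mismatch of a single mesh step must be shown to be harmless, which again follows from the Hölder bound (A5) on the increments of $A^{-1}B(\cdot)A^{-\ga}$.
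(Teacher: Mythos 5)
Your overall strategy --- deduce \eqref{eq:1.8} from Theorem \ref{th:3.4} via the semigroup--propagator dictionary of Proposition \ref{prop:3.8}, with the observation that the free evolution is autonomous so that $U_0(t,s)=e^{-(t-s)A}$ --- is exactly the paper's intended argument. However, your execution contains a genuine error. The left-hand side of \eqref{eq:3.46} is $\sup_{\gt\ge 0}\|(e^{-\gt\cB/n}e^{-\gt\cK_0/n})^n-e^{-\gt\cK}\|$, with $\cB$ acting \emph{first} from the left; by the \emph{second} identity of Proposition \ref{prop:3.8} this equals $\esssup_{(t,s)\in\gD}\|U(t,s)-U_n(t,s)\|$, where $U_n$ is built from $G_j(t,s;n)=e^{-\frac{t-s}{n}B(s+j\frac{t-s}{n})}U_0(s+j\frac{t-s}{n},s+(j-1)\frac{t-s}{n})$. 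Substituting $U_0(t,s)=e^{-(t-s)A}$ turns this $G_j$ into precisely the $G_j$ of the statement, sampling points and ordering included, so the theorem follows in one line with no residual mismatch. You instead invoked the \emph{first} identity and asserted that the left-hand side of \eqref{eq:3.46} equals $\esssup\|U-V_n\|$; that is the identity for the opposite product order $(e^{-\gt\cK_0/n}e^{-\gt\cB/n})^n$, which is not what Theorem \ref{th:3.4} controls.

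This misidentification then forces you to pass from $V_n$ to $U_n$, and here the proposal has a real gap. The two ordered products are \emph{not} identical: writing $h=(t-s)/n$ and $M=e^{-hA}e^{-hB(t-h)}e^{-hA}\cdots e^{-hA}$, one has $U_n=e^{-hB(t)}M$ while $V_n=Me^{-hB(s)}$, so they differ by single endpoint factors involving the \emph{unbounded} operators $B(t)$ and $B(s)$, not by increments $B(t')-B(s')$. Assumption (A5) bounds $\|A^{-1}(B(t)-B(s))A^{-\ga}\|$ and is therefore not the relevant tool; controlling $\|(e^{-hB(t)}-I)M\|$ and $\|M(I-e^{-hB(s)})\|$ in operator norm requires exploiting the smoothing of the adjacent factor $e^{-hA}$ together with (A2) and (A4), i.e.\ redoing a nontrivial piece of the analysis rather than ``relabelling a mesh step.'' The fix is simply to use the second identity of Proposition \ref{prop:3.8}, which matches the product order of Theorem \ref{th:3.4} and makes the deduction immediate, as the paper does.
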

{{From Theorem \ref{th:3.5} and Proposition \ref{prop:3.8}}} we get
\begin{thm}[{\cite[Theorem 5.6]{NeiSteZag2017}}]
Let the assumptions  (A1)-(A4) be satisfied  for some $\ga \in ({1}/{2},1)$.
If (A6) is valid, then for $n \to \infty$ one obtains a better rate:
 \bed
 \esssup_{(t,s)\in\gD}\|U_n(t,s)-U(t,s)\| = O(1/n^{1-\ga}) \ .
\eed
\end{thm}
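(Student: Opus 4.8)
The plan is to read the statement off Theorem~\ref{th:3.5} and Proposition~\ref{prop:3.8}; no new estimate is needed, only a careful identification of the operators and approximants occurring in those two results with the ones in the statement. First I would fix the ambient evolution-semigroup structure. Under (A1)--(A3) the closure $\cK_0$ of $D_0 + \cA$ generates a $C_0$-semigroup on $L^p([0,T],\gotX)$; since the family of generators underlying $\cA$ is the \emph{constant} family $t\mapsto A$, this semigroup acts as $(\cU_0(\gt)f)(t) = e^{-\gt A}\chi_{[0,T]}(t-\gt)f(t-\gt)$, so $\cK_0$ is an evolution generator in the sense of Definition~\ref{def:3.6} and its associated propagator is $U_0(t,s) = e^{-(t-s)A}$, $(t,s)\in\gD$. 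Likewise, by (A1)--(A3) together with \cite[Theorem~1.2]{NeiSteZag2016}, the closure $\cK$ of $\wt\cK = D_0 + \cA + \cB$ is an evolution generator whose propagator $\{U(t,s)\}_{(t,s)\in\gD}$ is precisely the one solving the Cauchy problem \eqref{eq:2.14} for $C(t)=A+B(t)$. Finally, (A3) guarantees that each $B(t)$ generates a contraction semigroup, so $\cB$ is the multiplication operator by a strongly measurable family of contraction generators, exactly the input required by Proposition~\ref{prop:3.8}.

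Next I would apply the second identity of Proposition~\ref{prop:3.8} to the pair $\{\cK,\cK_0\}$ and this $\cB$:
\[
\sup_{\gt\ge 0}\bigl\|e^{-\gt\cK} - \bigl(e^{-\gt\cB/n}e^{-\gt\cK_0/n}\bigr)^n\bigr\|
= \esssup_{(t,s)\in\gD}\|U(t,s) - U_n(t,s)\|,\qquad n\in\dN,
\]
where $U_n(t,s)=\prod_{j=1}^{n\leftarrow}G_j(t,s;n)$ and $G_j(t,s;n)=e^{-\tfrac{t-s}{n}B(s+j\tfrac{t-s}{n})}\,U_0\bigl(s+j\tfrac{t-s}{n},\,s+(j-1)\tfrac{t-s}{n}\bigr)$. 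Because $U_0(t,s)=e^{-(t-s)A}$, the second factor collapses to $e^{-\tfrac{t-s}{n}A}$, so this $U_n$ is \emph{verbatim} the approximant in the statement. It then remains to invoke Theorem~\ref{th:3.5}: under (A1)--(A4) for the given $\ga\in(1/2,1)$ and under (A6), the left-hand side of the display (with the harmless notational shorthand $B$ for the multiplication operator $\cB$ used there) is $O(1/n^{1-\ga})$ as $n\to\infty$. Combining the two displays gives $\esssup_{(t,s)\in\gD}\|U_n(t,s)-U(t,s)\| = O(1/n^{1-\ga})$, which is the claim.

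The only genuine point to be careful about --- rather than a real obstacle --- is matching the hypotheses: Theorem~\ref{th:3.5} needs the restriction $\ga>1/2$ (which is not required in Theorem~\ref{th:3.4}, where one instead uses (A5) and a H\"older exponent $\gb>\ga$), and Proposition~\ref{prop:3.8} needs $\cK_0$ to be an \emph{evolution} generator; both are met here precisely because $\cK_0$ is built from the constant family $A$ and we assume $\ga\in(1/2,1)$ throughout. Once these compatibility checks are made, the proof is a one-line concatenation of the two cited results.
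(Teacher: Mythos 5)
Your argument is exactly the paper's: the theorem is stated there as an immediate consequence of Theorem \ref{th:3.5} combined with the second identity of Proposition \ref{prop:3.8}, after observing that the propagator of $\cK_0$ is $U_0(t,s)=e^{-(t-s)A}$ so that the abstract approximants $G_j$ collapse to $e^{-\frac{t-s}{n}B(s+j\frac{t-s}{n})}e^{-\frac{t-s}{n}A}$. Your hypothesis-matching remarks (the role of $\ga>1/2$ versus (A5)/(A6), and $\cK_0$ being an evolution generator) are correct and consistent with the paper.
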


\subsection{Comments}\label{sec:3.4}

\paragraph{Section \ref{sec:3.1}}
\vspace{-2ex}
It is unclear whether Theorem \ref{th:3.1} is sharp. Theorem \ref{th:3.1} should be valid
if the contractivity of the involved semigroups is replaced Trotter-stability.
A pair of generators $\{A,B\}$ is called Trotter-stable if the Trotter product is uniformly bounded in $n \in \N$, i.e.,  if
\bed
\sup_{n\in\N}\sup_{\gt \ge 0}\left\|\left(e^{-\gt A/n}e^{-\gt B/n}\right)^n\right\| < \infty.
\eed
It turns out that the pair $\{A,B\}$ is Trotter-stable if and only $\{B,A\}$ is Trotter-stable.

\vspace{-4ex}
\paragraph{Section \ref{sec:3.2}}
\vspace{-2ex}
Theorems \ref{th:3.4} and {{\ref{th:3.5}}} are proved in \cite{NeiSteZag2016} and
\cite{NeiSteZag2017} under the assumption that the pair $\{\cK_0,\cB\}$ is Trotter-stable.
The convergence rates \eqref{eq:3.46} and \eqref{eq:3.47} differ significantly from the convergence rate $O(\ln(n)/n)$ of Proposition \ref{prop:2.1}. It is an open problem whether the convergences rates \eqref{eq:3.46} and \eqref{eq:3.47} can be improved to $O(\ln(n)/n)$.
One has to mention that the convergence \eqref{eq:3.47} coincides with that one of
\eqref{eq:3.40} despite the fact that $\cK_0$ is not a generator of a holomorphic  semigroup. Indeed, the generator $\cK_0$ is not holomorphic since $	e^{-\gt\cK_0} = 0$ for $\gt \ge T$.

\vspace{-4ex}
\paragraph{{{Section \ref{sec:3.3}}}}
\vspace{-2ex}
{{It is a bit surprising that the operator-norm convergence of the Trotter product formula
for the pairs: generator of evolution semigroup and multiplication operator, is equivalent to the operator-norm convergence of a certain approximation of the corresponding propagator, see
Proposition \ref{prop:3.8}.
In particular, this yields that two convergences:  \eqref{eq:2.22} and \eqref{eq:2.31}, are equivalent.}}

\section{Sharpness}\label{sec:4}

\subsection{Example}\label{sec:4.1}

Let us consider a ``solvable'' example.
We study bounded perturbations of the evolution generator  $D_0$.
To do this aim we consider $\gotX = \C$ and we denote by  $L^2([0,1])$ the Hilbert
space $L^2([0,T],\C)$.

For $t \in [0,1]$, let $q: t \mapsto q(t) \in L^\infty([0,1])$.
Then, $q$ induces a bounded multiplication operator $Q$ in the Banach space ${{L^2([0,1])}}$:
\bed
(Qf)(t) = q(t) f(t), \quad f\in {{L^2([0,1])}}.
\eed
For simplicity we assume that $q\geq 0$.
Then $Q$ generates on $L^p([0,1])$ a contraction semigroup $\{e^{- \tau Q}\}_{\tau \geq 0}$.
Since generator $Q$ is bounded, the closed operator $\cK:= D_0 + Q$, with domain
$\dom(\cK) = \dom(D_0)$,
is generator of a semigroup on $L^p([0,1])$. From \cite{Trotter1959} one gets immediately
\bed
 \slim_{n\to\infty}\left(e^{-\gt D_0/n}e^{-\gt Q/n}\right)^n = e^{-\gt (D_0+Q)}
\eed
uniformly in $\tau\in[0,T]$ for any $T > 0$. One easily checks that $\cK$
is an evolution generator. A straightforward computation shows that
\bed
\left(e^{-\tau(D_0 + Q)}f\right)(t)
= e^{-\int_{t-\tau}^t q(y) dy}\chi_{[0,1]}(t-\tau)f(t-\tau)
\eed
which yields that the propagator corresponding to $\cK$ is given by
\bed
U(t, s) = e^{-\int_s^t  q(y)dy}, \quad (t,s) \in \gD.
\eed
A simple computation shows that
\bed
\left(\left(e^{- \tau D_0/n}e^{- \tau Q/n}  \right)^n f\right)(t) =:
V_n(t,t-\gt)\chi_{[0,T]}(t-\gt)f(t-\gt) \ .
\eed
Then by straightforward calculations one finds that
\bed
V_n(t,s)= e^{-\tfrac{t-s}{n} \sum_{k=0}^{n-1} q(s + k\tfrac{t-s}{n})},\quad (t,s) \in \gD \ .
\eed
\begin{prop}[{\cite[Proposition 3.1]{NeiSteZag2017b}}]\label{prop:3.1}
Let $q \in L^\infty([0,T])$ be non-negative. Then
\bed
\begin{split}
\sup_{\gt \ge 0}&\left\|e^{-\gt(D_0 + Q)} - \left(e^{-\gt D_0/n}e^{-\gt Q/n}\right)^n\right\|_{\cB(L^p([0,1]))}
\\
&=
\gT\left(\esssup_{(t,s)\in\gD}\Big|\int^t_s q(y)dy - \frac{t-s}{n} \sum_{k=0}^{n-1} q(s +
k\tfrac{t-s}{n})\Big|\right)
\end{split}
\eed
as $n\to\infty$, where $\gT$ is the Landau symbol defined in the introduction.
\end{prop}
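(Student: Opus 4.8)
The plan is to reduce the operator-norm statement to a pointwise estimate on the propagators via Proposition~\ref{prop:3.8} and then establish sharpness of the numerical quadrature error. First I would invoke Proposition~\ref{prop:3.8} with $\cK_0 = D_0$, $\cB = Q$, so that the left-hand side equals $\esssup_{(t,s)\in\gD}\|U(t,s) - V_n(t,s)\|$. Since $\gotX = \C$ here, the operator norm on $\gotX$ is just the absolute value, and we have the explicit formulas $U(t,s) = e^{-\int_s^t q(y)\,dy}$ and $V_n(t,s) = e^{-\frac{t-s}{n}\sum_{k=0}^{n-1} q(s + k\frac{t-s}{n})}$. Writing $I(t,s) := \int_s^t q(y)\,dy$ and $R_n(t,s) := \frac{t-s}{n}\sum_{k=0}^{n-1} q(s + k\frac{t-s}{n})$ for the left Riemann sum, the task becomes to show
\bed
\esssup_{(t,s)\in\gD}\big|e^{-I(t,s)} - e^{-R_n(t,s)}\big| = \gT\Big(\esssup_{(t,s)\in\gD}|I(t,s) - R_n(t,s)|\Big).
\eed

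Next I would exploit the elementary inequality $|e^{-a} - e^{-b}| \le |a-b|$ valid for $a,b \ge 0$, together with a matching lower bound. Since $q \ge 0$ and $q \in L^\infty$, both $I(t,s)$ and $R_n(t,s)$ lie in the compact interval $[0, T\|q\|_\infty]$; on this interval the function $x \mapsto e^{-x}$ has derivative bounded below in absolute value by $e^{-T\|q\|_\infty} > 0$, so by the mean value theorem $|e^{-I} - e^{-R_n}| \ge e^{-T\|q\|_\infty}|I - R_n|$ as well. Combining the two bounds,
\bed
e^{-T\|q\|_\infty}\,|I(t,s) - R_n(t,s)| \;\le\; \big|e^{-I(t,s)} - e^{-R_n(t,s)}\big| \;\le\; |I(t,s) - R_n(t,s)|,
\ee
uniformly in $(t,s) \in \gD$ and in $n$. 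Taking $\esssup$ over $\gD$ on all three terms and noting that the outer constants $e^{-T\|q\|_\infty}$ and $1$ are positive and independent of $n$ gives exactly the $\gT$-relation, since $\gT(f(n))$ is unaffected by multiplication by a fixed positive constant.

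The main subtlety — rather than a genuine obstacle — is bookkeeping around the essential supremum: $q$ is only defined up to a.e.\ equivalence, so the pointwise evaluations $q(s + k\frac{t-s}{n})$ in $R_n$ must be interpreted as coming from the representative fixed in the statement of Proposition~\ref{prop:3.1}, and one should check that the quadrature error $I(t,s) - R_n(t,s)$ is a measurable function of $(t,s)$ whose $\esssup$ is the relevant quantity; this is routine once the representative is fixed. One should also confirm that the degenerate cases $t = s$ (where both sides vanish) and the closedness issue ($\gD$ versus $\bar\gD$) cause no trouble, which they do not because the estimate is uniform and continuous in $(t,s)$ on the interior. Beyond that, the argument is purely the two-sided Lipschitz bound on $e^{-x}$ over a bounded interval, so no further analytic input is needed.
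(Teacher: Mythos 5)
Your argument is correct and follows essentially the same route as the paper's (and the cited source's): reduce via Proposition~\ref{prop:3.8} to $\esssup_{(t,s)\in\gD}|U(t,s)-V_n(t,s)|$ with the explicit scalar propagators, then sandwich $|e^{-a}-e^{-b}|$ between $e^{-T\|q\|_\infty}|a-b|$ and $|a-b|$ for $a,b\in[0,T\|q\|_\infty]$, which is exactly the observation that the Trotter error is $\gT$ of the Darboux--Riemann quadrature error. Your attention to the choice of representative of $q$ and to measurability of the sample points is the right bookkeeping and causes no difficulty, so no further input is needed.
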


Note that by Proposition \ref{prop:3.8} the operator-norm
convergence rate of the Trotter product formula for the pair $\{D_0 , Q\}$ coincides with the convergence
rate of the integral Darboux-Riemann sum approximation of the Lebesgue integral.

\subsection{Results}\label{sec:4.2}

Below we give a series of examples which show the dependence of the convergence rate on the smoothness of the function $q \in L^\infty([0,T])$. First we consider the H\"older {{and Lipschitz}} continuous cases.
\begin{thm}[{\cite[Theorem 3.2]{NeiSteZag2017b}}]\label{th:4.2}
If the function: $q \in C^{0,\gb}([0,T])$, $\gb \in (0,1]$, is non-negative, then
for $n \to \infty$ one gets
\bed
\sup_{\gt \ge 0}\left\|e^{-\gt(D_0+Q)} - \left(e^{-\gt D_0/n}e^{-\gt Q/n}\right)^n\right\| =
O({1}/{n^\gb}) \ .
\eed
\end{thm}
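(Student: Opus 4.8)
The plan is to reduce the operator-norm estimate to a purely one-dimensional quadrature problem and then invoke the regularity of $q$. By Proposition~\ref{prop:3.1} (or Proposition~\ref{prop:3.8}), one has
\bed
\sup_{\gt \ge 0}\left\|e^{-\gt(D_0+Q)} - \left(e^{-\gt D_0/n}e^{-\gt Q/n}\right)^n\right\|
= \gT\!\left(\esssup_{(t,s)\in\gD}\Big|\int^t_s q(y)\,dy - \frac{t-s}{n} \sum_{k=0}^{n-1} q\big(s + k\tfrac{t-s}{n}\big)\Big|\right),
\eed
so it suffices to show that the left Riemann sum of $q$ over $[s,t]$ with $n$ equal subintervals approximates $\int_s^t q$ with an error $O(1/n^\gb)$, uniformly in $(t,s)\in\gD$. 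First I would split the integral over $[s,t]$ into the $n$ subintervals $I_k = [s+k\tfrac{t-s}{n}, s+(k+1)\tfrac{t-s}{n}]$ of length $h := (t-s)/n$, and on each $I_k$ compare $\int_{I_k} q(y)\,dy$ with the rectangle value $h\,q(s+kh)$, giving the pointwise bound
\bed
\Big|\int_{I_k} \big(q(y) - q(s+kh)\big)\,dy\Big| \le \int_{I_k} |q(y) - q(s+kh)|\,dy \le \int_{I_k} L_\gb\, |y - (s+kh)|^\gb \,dy \le L_\gb\, h^{1+\gb},
\eed
using the H\"older/Lipschitz bound $|q(y)-q(y')| \le L_\gb |y-y'|^\gb$ coming from $q \in C^{0,\gb}([0,T])$ with constant $L_\gb$.

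Summing over $k = 0,\ldots,n-1$ yields
\bed
\Big|\int^t_s q(y)\,dy - h\sum_{k=0}^{n-1} q(s+kh)\Big| \le n\cdot L_\gb\, h^{1+\gb} = L_\gb\, \frac{(t-s)^{1+\gb}}{n^\gb} \le L_\gb\, \frac{T^{1+\gb}}{n^\gb},
\eed
which is $O(1/n^\gb)$ uniformly over $(t,s) \in \gD$ since $0 \le t-s \le T$. Plugging this into the $\gT$-estimate from Proposition~\ref{prop:3.1} gives the claimed upper bound $O(1/n^\gb)$ (the matching lower bound implicit in the $\gT$ is not needed for this one-sided statement, but is consistent with it).

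I do not expect any serious obstacle here: the argument is essentially the textbook error analysis for the left-endpoint rectangle rule specialised to H\"older-continuous integrands, and the only mildly delicate point is ensuring the estimate is genuinely uniform in both $t$ and $s$, which is handled by bounding $(t-s)^{1+\gb}$ by $T^{1+\gb}$. The conceptual content — that the operator-norm Trotter convergence rate for the pair $\{D_0,Q\}$ is exactly the Riemann-sum convergence rate — has already been extracted in Proposition~\ref{prop:3.1}, so the remaining step is the routine quadrature bound just sketched. If one wanted the sharp constant or a two-sided $\gT(1/n^\gb)$ statement, one would additionally exhibit a specific $q$ (e.g.\ a suitable H\"older bump) for which the Riemann sum error is bounded below by $c/n^\gb$, but that refinement is beyond what the stated theorem requires.
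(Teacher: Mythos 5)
Your proposal is correct and follows exactly the route the paper intends: reduce via Proposition~\ref{prop:3.1} to the left-endpoint Riemann-sum error for $\int_s^t q$, and then apply the standard quadrature bound $L_\gb h^{1+\gb}$ per subinterval for a $C^{0,\gb}$ integrand, summing to $L_\gb (t-s)^{1+\gb}/n^\gb \le L_\gb T^{1+\gb}/n^\gb$ uniformly on $\gD$. The survey states Theorem~\ref{th:4.2} by citation without reproducing the proof, but your argument is the same as that of the cited source, and your remark that only the upper-bound half of the $\gT$-relation is needed is the right reading of Proposition~\ref{prop:3.1}.
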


Now a natural question that one may to ask is: what happens, when $q$ is simply \textit{continuous}?
\begin{thm}[{\cite[Theorem 3.3]{NeiSteZag2017b}}]\label{th:4.3}
If $q: [0,1] \rightarrow \C$, is continuous and non-negative, then for $n\to \infty$
\be\label{eq:4.73}
\left\|e^{-\gt(D_0+Q)} - \left(e^{-\gt D_0/n}e^{-\gt Q/n}\right)^n\right\| = o(1) \ .
\ee
\end{thm}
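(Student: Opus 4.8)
The plan is to reduce everything to Proposition \ref{prop:3.1}, which already identifies the operator-norm convergence rate of the Trotter product for the pair $\{D_0,Q\}$ with the sup over $\gD$ of the error in the left-endpoint Darboux--Riemann sum for $\int_s^t q(y)\,dy$. Thus the theorem is equivalent to the purely real-variable claim that
\bed
\sup_{(t,s)\in\gD}\Big|\int_s^t q(y)\,dy - \frac{t-s}{n}\sum_{k=0}^{n-1} q\big(s + k\tfrac{t-s}{n}\big)\Big| \longrightarrow 0 \quad (n\to\infty)
\eed
whenever $q$ is continuous on the compact interval $[0,1]$. This is where uniform continuity does the work: unlike the H\"older case of Theorem \ref{th:4.2}, we no longer get a power of $n$, only $o(1)$.

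First I would fix $\e > 0$ and invoke uniform continuity of $q$ on $[0,1]$ to get $\gd > 0$ with $|q(x)-q(y)| \le \e$ whenever $|x-y|\le\gd$. Next, for a pair $(t,s)\in\gD$ the Riemann sum uses the partition of $[s,t]$ into $n$ equal subintervals of length $(t-s)/n \le 1/n$. Once $1/n \le \gd$, on each subinterval $[s+k\tfrac{t-s}{n},\,s+(k+1)\tfrac{t-s}{n}]$ the integrand differs from its left-endpoint value by at most $\e$, so the per-subinterval error is bounded by $\e\cdot\tfrac{t-s}{n}$; summing over $k=0,\dots,n-1$ gives a total error at most $\e(t-s) \le \e$. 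Since this bound is uniform in $(t,s)\in\gD$ and $\e$ was arbitrary, the displayed supremum tends to $0$. Feeding this into Proposition \ref{prop:3.1} yields \eqref{eq:4.73}.

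The only mild subtlety — and the step I would be most careful about — is the uniformity in the pair $(t,s)$: the mesh of the partition underlying the Riemann sum is $(t-s)/n$, which shrinks as $t-s$ shrinks, so the estimate only gets better for smaller intervals, and the single threshold $n \ge 1/\gd$ works simultaneously for all $(t,s)\in\gD$. (One should also note that the integrand is genuinely continuous, hence Riemann integrable, so the sum converges to the Lebesgue integral and the two notions of integral coincide; this is automatic here since $q$ is continuous.) No real obstacle remains beyond bookkeeping, because Proposition \ref{prop:3.1} has already done the operator-theoretic reduction; the content of Theorem \ref{th:4.3} is exactly the classical fact that Riemann sums of a continuous function converge uniformly over all subintervals.

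One remark worth including: this argument also makes transparent \emph{why} no rate survives. If $q$ is merely continuous, its modulus of continuity $\go_q(\gd)$ can decay arbitrarily slowly, and the error above is of order $\go_q(1/n)$, which is $o(1)$ but can be made to beat any prescribed rate $1/n^\gb$ by choosing $q$ appropriately — consistent with the sharpness discussion in Section \ref{sec:4} and with the gap between Theorem \ref{th:4.2} and Theorem \ref{th:4.3}.
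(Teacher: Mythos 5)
Your proposal is correct and follows exactly the route the paper intends: the survey presents Theorem \ref{th:4.3} as a consequence of the identification in Proposition \ref{prop:3.1} of the Trotter error with the Darboux--Riemann sum error for $\int_s^t q$, and the remaining real-variable step is precisely the uniform-continuity argument you give, with the uniformity over $(t,s)\in\gD$ handled as you note (the mesh $(t-s)/n\le 1/n$ only improves on shorter intervals). Your closing remark on the modulus of continuity correctly anticipates why Theorems \ref{th:4.2} and \ref{th:4.4} bracket this result.
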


We comment that for a general continuous $q$ one can say nothing about the Trotter product formula convergence rate. Indeed, as it follows from the next theorem the convergence to zero in (\ref{eq:4.73}) may be \textit{arbitrary} slow.
\begin{thm}[{\cite[Theorem 3.4]{NeiSteZag2017b}}]\label{th:4.4}
 Let $\gd_n>0$ be a sequence with $\gd_n \to 0$ as $n \to \infty$. Then there exists a continuous
 function $q:[0,1] \rightarrow \dR$ such that
 \bed
\sup_{\gt \ge 0}\left\|e^{-\gt(D_0 + Q)} - \left(e^{-\gt D_0/n}
e^{-\gt Q/n}\right)^n\right\|_{\cl B( L^p([0,1]))} = \go(\gd_n) \ ,
 \eed
as $n\to\infty$, where $\omega$ is the Landau symbol defined in the Introduction.
\end{thm}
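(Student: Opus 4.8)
The plan is to reduce everything, via Proposition~\ref{prop:3.1} (or directly Proposition~\ref{prop:3.8}), to a purely classical question about quadrature: one must exhibit a continuous nonnegative $q$ on $[0,1]$ for which the left Riemann sums converge to the Lebesgue integral arbitrarily slowly, uniformly over the triangle $\gD$. Concretely, writing
\bed
R_n(t,s) := \Big|\int_s^t q(y)\,dy - \tfrac{t-s}{n}\sum_{k=0}^{n-1} q\big(s + k\tfrac{t-s}{n}\big)\Big|,
\eed
the goal is to build $q$ so that $\esssup_{(t,s)\in\gD} R_n(t,s) = \go(\gd_n)$; then Proposition~\ref{prop:3.1} transfers this to the stated operator-norm lower bound (the $\gT$ there gives both directions, but only the lower bound is needed here). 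So the real content is a construction, not an estimate.

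The construction I would use is a lacunary bump series. Fix a rapidly increasing sequence of integers $m_j$ (to be chosen) and put $q = \sum_{j\ge 1} c_j\, \varphi_j$, where $\varphi_j$ is a nonnegative continuous bump (say a triangular spike of height $1$) supported on a single interval of length $1/(2m_j)$ centred at a point of the form $(\text{odd})/(2m_j)$, and $c_j>0$ is a small amplitude chosen so that $\sum c_j<\infty$ (ensuring $q$ is continuous and bounded — uniform convergence of the series). The key point: when $n = m_j$, the grid $\{s + k(t-s)/n\}$ with, say, $s=0$, $t=1$ has spacing $1/m_j$, so by placing the spike of $\varphi_j$ strictly between two consecutive grid points we arrange that all sample points miss the support of $\varphi_j$; hence the Riemann sum at $n=m_j$ does not "see" $c_j\varphi_j$ at all, whereas $\int_0^1 c_j\varphi_j \ge \eta\, c_j/m_j$ for a fixed geometric constant $\eta$ coming from the triangular profile. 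Choosing $c_j := m_j\, \gd_{m_j}^{1/2}$ (or any sequence with $c_j/m_j \gg \gd_{m_j}$ yet $\sum c_j < \infty$, which is possible since $\gd_n\to 0$ lets us thin the sequence $m_j$ as aggressively as needed) gives $R_{m_j}(1,0) \ge \eta\, c_j/m_j \gg \gd_{m_j}$, i.e. $R_n = \go(\gd_n)$ along the subsequence $n=m_j$, which is exactly what $\go(\gd_n)$ demands.

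The main obstacle — and the step deserving the most care — is twofold. First, one needs the spikes of different scales not to interfere destructively: contributions of $\varphi_i$ for $i\ne j$ to the $n=m_j$ Riemann sum must not accidentally cancel the deficit created by $\varphi_j$. This is handled by making the $m_j$ grow fast enough (e.g. $m_{j+1}$ a large multiple of $m_j!$) so that: for $i<j$, $\varphi_i$ is essentially resolved by the fine grid and its Riemann-sum error is $O(c_i/m_j)$, negligible; for $i>j$, the total mass $\sum_{i>j} c_i$ is summable and can be made $\ll c_j/m_j$, so even in the worst case it cannot erase the deficit. Second, one must upgrade the single-pair bound $R_{m_j}(1,0)\ge\ldots$ to the essential supremum over all $(t,s)\in\gD$ — but this direction is free, since the sup only makes $R_n$ larger, so $\esssup_{\gD} R_{m_j} \ge R_{m_j}(1,0)$ suffices. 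Finally one invokes Proposition~\ref{prop:3.1}: $\sup_{\gt\ge0}\|e^{-\gt(D_0+Q)} - (e^{-\gt D_0/n}e^{-\gt Q/n})^n\| = \gT(\esssup_\gD R_n)$, and since the right-hand side is $\go(\gd_n)$, so is the left, completing the proof. \hfill$\Box$
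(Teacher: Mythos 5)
Your reduction to Proposition~\ref{prop:3.1} is the right (and, in the cited source, the intended) first step: the theorem is equivalent to producing a continuous nonnegative $q$ whose Riemann-sum errors $R_n$ satisfy $\esssup_{\gD}R_n=\go(\gd_n)$, and only the lower bound in the $\gT$-equivalence is needed. The gap is in the construction. A single bump of height $c_j$ and width $1/(2m_j)$ has integral $c_j/(4m_j)$, so the Riemann-sum error it can create at scale $n=m_j$ is at most of order $c_j/m_j$, i.e.\ at most $(\text{height})\times(\text{grid spacing})$. For the subsequence bound $R_{m_j}(1,0)\gg\gd_{m_j}$ you therefore need $c_j\gg m_j\gd_{m_j}$, while continuity (indeed mere boundedness) of $q$ forces $c_j$ to stay bounded, in fact $c_j\to0$. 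These two requirements are compatible only if $\liminf_n n\gd_n=0$. But the theorem is for an \emph{arbitrary} null sequence: take $\gd_n=1/\log n$, so that $n\gd_n\to\infty$; then no choice of the subsequence $m_j$ helps, and your parenthetical claim that ``$\gd_n\to0$ lets us thin the sequence $m_j$ as aggressively as needed'' is false --- thinning makes $m_j\gd_{m_j}$ larger, not smaller. Your proposed $c_j=m_j\gd_{m_j}^{1/2}$ is then unbounded and $q$ is not even in $L^\infty$. Note that this is exactly the regime the theorem is about: for $\gd_n$ decaying faster than $1/n$ the statement is already contained in Theorem~\ref{th:4.2}, so a construction that only works when $\gd_{m_j}\ll 1/m_j$ proves nothing new.

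The repair is to replace the single bump per scale by a \emph{comb}: at scale $m_j$ put one bump of the same small height $c_j$ in (essentially) every grid cell, i.e.\ $\sim m_j$ bumps, each centred at an odd multiple of $1/(2m_j)$ so that all of them miss the grid $\{k/m_j\}$. The deficits then add up to $\asymp c_j$ rather than $c_j/m_j$, so the requirement becomes $c_j\gg\gd_{m_j}$ together with $c_j\to0$, which is achievable for any null sequence $\gd_n$ (fix $c_j=1/j$, then choose $m_j$ so large that $\gd_{m_j}\le c_j/j$). Your interference analysis between scales (fine grids resolve coarse combs up to $O(1/m_j)$; the tail $\sum_{i>j}c_i$ is made negligible by choosing $m_j$, $c_j$ recursively, or by placing the combs on disjoint subintervals accumulating at a point) then goes through essentially as you describe, and the passage from the single pair $(1,0)$ to $\esssup_\gD$ and back through Proposition~\ref{prop:3.1} is fine. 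As written, however, the quantitative heart of the argument does not close.
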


Our final comment concerns the case when $q$ is only \textit{measurable}. Then it can happen that the
Trotter product formula for that pair $\{D_0 , Q\}$ does \textit{not} converge in the
\textit{operator-norm} topology:

\begin{thm}[{\cite[Theorem 3.5]{NeiSteZag2017b}}]\label{th:4.5}
 There is a non-negative measurable function $q \in L^\infty([0,1])$, such that
 \bed
 \liminf_{n\to\infty}\; \sup_{\gt \ge 0}\left\|e^{-\gt(D_0 + Q)} - \left(e^{-\gt D_0/n}
 e^{-\gt Q/n}\right)^n\right\|_{\cl B( L^p([0,1]))} > 0 \ .
 \eed
\end{thm}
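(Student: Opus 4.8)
The plan is to exploit Proposition~\ref{prop:3.1}, which reduces the operator-norm distance between $e^{-\gt(D_0+Q)}$ and the Trotter approximant to the supremum over $(t,s)\in\gD$ of the Darboux--Riemann discrepancy
\[
R_n(t,s) := \Big|\int_s^t q(y)\,dy - \frac{t-s}{n}\sum_{k=0}^{n-1} q\big(s + k\tfrac{t-s}{n}\big)\Big|,
\]
up to constants. Thus it suffices to construct a non-negative $q \in L^\infty([0,1])$ for which $\esssup_{(t,s)\in\gD} R_n(t,s)$ does not tend to zero, i.e.\ there is $\e_0>0$ and a subsequence $n_k\to\infty$ along which, for suitable $(t_k,s_k)\in\gD$, one has $R_{n_k}(t_k,s_k)\ge\e_0$.

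First I would set $s=0$ and $t=1$, so that the relevant quantity becomes $\big|\int_0^1 q - \frac1n\sum_{k=0}^{n-1}q(k/n)\big|$, and look for a bounded $q$ whose equally-spaced Riemann sums fail to converge to its integral. The natural candidate is a lacunary-type construction: choose a rapidly increasing sequence of integers $m_1 \ll m_2 \ll \cdots$ (say $m_{j+1}$ a large multiple of $m_j$) and let $q$ be built from pieces that are ``invisible'' to the $m_j$-point grid — for instance $q = \1_E$ where $E\subset[0,1]$ is a set of measure $1/2$ arranged so that $E$ contains none of the points $k/m_j$ for all large $j$ but still has density $1/2$; then the $m_j$-point Riemann sum of $q$ is (nearly) $0$ while $\int_0^1 q = 1/2$, giving $R_{m_j}(0,1)\ge 1/2 - o(1)$. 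Concretely one can take $E$ to be a union of short intervals placed in the ``gaps'' of successively finer dyadic (or $m_j$-adic) grids, pushing the mass off the $m_j/$-lattice points; since we have the full freedom of $(t,s)\in\gD$ at our disposal, one even has room to choose the evaluation nodes to land in the complement of $E$.

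The main obstacle is the uniformity over \emph{all} $n$, not merely over the subsequence $\{m_j\}$: Proposition~\ref{prop:3.1} gives an asymptotic $\gT(\cdot)$, so I must ensure that the $\limsup$ over $n$ of $\esssup_{(t,s)}R_n(t,s)$ is bounded below — which is automatic once a single subsequence works — but I must also make sure the construction of $E$ is internally consistent, i.e.\ the requirement ``$E$ misses the $m_{j+1}$-grid'' does not destroy the density or the property already arranged at stage $j$. This is handled by the usual nested/lacunarity argument: at stage $j$ one only modifies $q$ on a set of measure $<2^{-j}$ inside the gaps of the stage-$j$ grid, so the construction converges in $L^1$, preserves $\int_0^1 q \in (0,1)$, keeps $0\le q\le 1$, and the finitely many constraints at each stage are mutually compatible because $m_{j+1}/m_j\to\infty$. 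Once such $q$ is in hand, Proposition~\ref{prop:3.1} immediately yields
\[
\liminf_{n\to\infty}\ \sup_{\gt\ge 0}\big\|e^{-\gt(D_0+Q)} - (e^{-\gt D_0/n}e^{-\gt Q/n})^n\big\|_{\cB(L^p([0,1]))} \ge c\,\e_0 > 0,
\]
which is the assertion. A secondary technical point to check is that $q$ may be taken genuinely in $L^\infty$ (bounded) rather than merely integrable — the indicator-function construction above does this automatically — and that the $L^p$-operator norm statement is insensitive to $p$, which is already built into Proposition~\ref{prop:3.1}.
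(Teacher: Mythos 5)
Your reduction of the problem, via Proposition \ref{prop:3.1}, to the failure of convergence of the equidistant Darboux--Riemann sums of a bounded measurable $q$ is exactly the intended route (the remark following Proposition \ref{prop:3.1} says as much), and taking $q=\1_E$ for a suitably constructed measurable set $E$ is the right kind of example. There are, however, two genuine gaps. First, the quantity controlled by Proposition \ref{prop:3.1} is an \emph{essential} supremum over the two-dimensional set $\gD$; for $q\in L^\infty$ the point values $q\big(s+k\tfrac{t-s}{n}\big)$ are representative-dependent, and arranging the discrepancy to be large at the single pair $(t,s)=(1,0)$ --- a null set in $\gD$ --- gives no lower bound on $\esssup_{(t,s)\in\gD}R_n(t,s)$ at all. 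You need, for each relevant $n$, a set of pairs $(t,s)$ of positive planar measure, with $t-s$ bounded away from $0$, on which the $n$-point sum misses the integral by a fixed $\e_0$. This part is repairable (remove whole neighbourhoods of the grid points from $E$ and perturb $(t,s)$ near $(1,0)$), but it is not addressed in your writeup.

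The more serious gap is that the theorem asserts $\liminf_{n\to\infty}(\cdots)>0$, i.e.\ the operator-norm distance stays bounded away from zero for \emph{all} sufficiently large $n$, whereas you explicitly settle for a $\limsup$ lower bound (``which is automatic once a single subsequence works''). A lacunary construction that defeats only the subsequence $\{m_j\}$ proves non-convergence, i.e.\ $\limsup>0$, but leaves entirely open the possibility that $\esssup_{(t,s)\in\gD}R_n(t,s)\to 0$ along the complementary values of $n$, in which case the stated $\liminf$ would be $0$. Making \emph{every} large $n$ bad simultaneously --- with a uniform $\e_0$, on positive-measure sets of $(t,s)$ with $t-s$ bounded below --- is precisely the hard arithmetic content of the theorem (it is in the spirit of Rudin's work on Riemann sums of measurable sets), and it is not delivered by your stage-by-stage scheme: once all $n$ must be handled, the constraints are no longer finitely many per stage and are in tension with keeping the density of $E$ fixed. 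As written, the proposal establishes a strictly weaker statement than Theorem \ref{th:4.5}.
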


We note that Theorem \ref{th:4.5} does not exclude the convergence of the Trotter product formula
for the pair $\{D_0 , Q\}$ in the \textit{strong} operator topology.

\subsection{Comments}\label{sec:4.3}

\paragraph{Section \ref{sec:4.1}}
\vspace{-2ex}

Our example can be considered as a kind of solvable model.
It fits into the evolution cases considered in Sections \ref{sec:2.3} and
\ref{sec:3.2}. Indeed, one has to set $\gotX = \C$, $p=2$, $L^p([0,T],\gotX) = L^2([0,T])$,
$A = I$ and $B(t) = q(t)$ and $\cB = Q$. One easily checks that
\bed
(e^{-\gt/n \cK_0}e^{-\gt \cB/n})^n = e^{-\gt}(e^{-\gt/n D_0}e^{-\gt Q/n})^n
\eed
which shows that the convergence rate of $(e^{-\gt/n D_0}e^{-\gt Q/n})^n$ coincides
with that one $(e^{-\gt/n \cK_0}e^{-\gt \cB/n})^n$. One easily checks that
the choice $A = I$, $B(t) = q(t)$, $q(t) \ge 0$, guarantees  the assumptions (A1)-(A4)
If $q \in C^{0,\gb}([0,T])$, then the assumption (A5) is satisfied. If $Q$ is Lipschitz continuous,
i.e. $q \in C^{0,1}([0,T])$, then the assumption (A6) is valid.

\vspace{-4ex}
\paragraph{Section \ref{sec:4.2}}
\vspace{-2ex}

Theorem \ref{th:4.2} shows that for $\gb \in (0,1)$ the convergence rate is $O(1/n^\gb)$, which is better than the convergence rates $O(1/n^{\gb-\ga})$ and $O(1/n^{1-\ga})$ in Theorems \ref{th:3.4}
and \ref{th:3.5}. Hence, they are not sharp.

Theorems \ref{th:4.3} and \ref{th:4.4} demonstrate that the convergence rate can be arbitrary slow
if the smoothness of $Q$ is weaker and weaker.

Finally, Theorem \ref{th:4.5} shows that there is a bounded operator such that the Trotter product
formula does not converge in the operator norm. This makes clear that Theorem \ref{th:3.2}
becomes false if the condition that $A$ is a holomorphic generator is dropped. Indeed the operator
$D_0$ which plays the role of $A$ of Theorem \ref{th:3.2} is not a generator of holomorphic semigroup.


\def\cprime{$'$}

\end{document}